\documentclass[12pt,reqno]{amsart}

\usepackage{amsfonts, amsthm, amsmath}
\allowdisplaybreaks[4]

\usepackage{rotating}

\usepackage{tikz}

\usepackage{graphics}

\usepackage{amssymb}

\usepackage{amscd}

\usepackage[latin2]{inputenc}

\usepackage{t1enc}

\usepackage[mathscr]{eucal}

\usepackage{indentfirst}

\usepackage{graphicx}

\usepackage{graphics}

\usepackage{pict2e}

\usepackage{mathrsfs}

\usepackage{enumerate}
\usepackage[pagebackref]{hyperref}
\hypersetup{backref, pagebackref, colorlinks=true}
\usepackage{cite}
\usepackage{color}
\usepackage{epic}
\usepackage{hyperref} 
\numberwithin{equation}{section}
\topmargin 0.8in
\textheight=8.2in
\textwidth=6.4in
\voffset=-.68in
\hoffset=-.68in

\theoremstyle{plain}

\newtheorem{theorem}{Theorem}[section]

\newtheorem{lemma}[theorem]{Lemma}

\newtheorem{corollary}[theorem]{Corollary}

\theoremstyle{definition}

\newtheorem{?}[theorem]{Problem}

\def\boxit#1{\leavevmode\hbox{\vrule\vtop{\vbox{\kern.33333pt\hrule
    \kern1pt\hbox{\kern1pt\vbox{#1}\kern1pt}}\kern1pt\hrule}\vrule}}

\usepackage{collectbox}

\makeatletter

\makeatother



\newcommand{\f}[1]{\ifthenelse{\equal{#1}{1}}{(q;q)_\infty}{(q^{#1};q^{#1})_{\infty}}}


\begin{document}
\title[Congruences modulo powers of 5]{Congruences modulo powers of 5 for $k$-colored partitions}

\author[D. Tang]{Dazhao Tang}

\address[Dazhao Tang]{College of Mathematics and Statistics, Chongqing University, Huxi Campus LD206, Chongqing 401331, P.R. China}
\email{dazhaotang@sina.com}

\date{\today}

\begin{abstract}
Let $p_{-k}(n)$ enumerate the number of $k$-colored partitions of $n$. In this paper, we establish some infinite families of congruences modulo 25 for $k$-colored partitions. Furthermore, we prove some infinite families of Ramanujan-type congruences modulo powers of 5 for $p_{-k}(n)$ with $k=2, 6$, and $7$. For example, for all integers $n\geq0$ and $\alpha\geq1$, we prove that
\begin{align*}
p_{-2}\left(5^{2\alpha-1}n+\dfrac{7\times5^{2\alpha-1}+1}{12}\right) &\equiv0\pmod{5^{\alpha}}
\end{align*}
and
\begin{align*}
p_{-2}\left(5^{2\alpha}n+\dfrac{11\times5^{2\alpha}+1}{12}\right) &\equiv0\pmod{5^{\alpha+1}}.
\end{align*}
\end{abstract}

\subjclass[2010]{05A17, 11P83}

\keywords{Partition; congruences; $k$-colored partitions}

\maketitle

\section{Introduction}
A \emph{partition} \cite{Andr1976} of a positive integer $n$ is a finite non-increasing sequence of positive integers $\lambda_{1}\geq\lambda_{2}\cdots\geq\lambda_{r}>0$ such that $\sum_{i=1}^{r}\lambda_{i}=n$. The $\lambda_{i}$'s are called the \emph{parts} of the partition. Let $p(n)$ denote the number of partitions of $n$, then
\begin{align*}
\sum_{n=0}^{\infty}p(n)q^{n}=\dfrac{1}{(q;q)_{\infty}}.
\end{align*}

Here and throughout the paper, we adopt the following customary notation on partitions and $q$-series:
\begin{align*}
(a;q)_{\infty}=\prod_{n=0}^{\infty}(1-aq^{n}),\quad |q|<1.
\end{align*}

A partition is called a $k$-colored partition if each part can appear as $k$ colors. Let $p_{-k}(n)$ count the number of $k$-colored partitions of $n$. The generating function of $p_{-k}(n)$ is given by
\begin{align*}
\sum_{n=0}^{\infty}p_{-k}(n)q^{n} &=\dfrac{1}{(q;q)_{\infty}^{k}}.
\end{align*}

For convention, we denote $p_{-1}(n)=p(n)$.

Many congruences modulo 5 and 25 enjoyed by $p_{-k}(n)$ have been found. For example, Ramanujan's so-called ``most beautiful identity'' for partition function $p(n)$ is given by
\begin{align}\label{Ramanujan most beautiful identity}
\sum_{n=0}^{\infty}p(5n+4)q^{n}=5\dfrac{(q^{5};q^{5})_{\infty}^{5}}{(q;q)_{\infty}^{6}},
\end{align}
which readily implies one of his three classical partition congruences, namely,
\begin{align}
p(5n+4)\equiv0\pmod{5}.\label{Rama mod 5}
\end{align}

Further, we have, modulo 25,
\begin{align*}
\sum_{n=0}^{\infty}p(5n+4)q^{n} &=5\dfrac{(q^{5};q^{5})_{\infty}^{5}}{(q;q)_{\infty}^{6}}\equiv5\dfrac{(q^{5};q^{5})_{\infty}^{4}}{(q;q)_{\infty}}\\
 &=5(q^{5};q^{5})_{\infty}^{4}\sum_{n=0}^{\infty}p(n)q^{n},
\end{align*}
from which it follows easily from \eqref{Rama mod 5} that
\begin{align*}
p(25n+24)\equiv0\pmod{25}.
\end{align*}

For $k=2$, Hammond and Lewis \cite{HL2004} as well as Baruah and Sarmah \cite[Eq. (5.4)]{BS2013} proved that
\begin{align}\label{partition pairs mod 5}
p_{-2}(5n+2)\equiv p_{-2}(5n+3)\equiv p_{-2}(5n+4)\equiv0\pmod{5}.
\end{align}

Later on, Chen et al.\cite[Eq. (1.17)]{CDHS2014} proved the following congruence modulo 25 for $p_{-2}(n)$ via modular forms:
\begin{align*}
p_{-2}(25n+23)\equiv0\pmod{25}.
\end{align*}

Quite recently, with the help of modular forms, Lazarev et al. \cite{LMRS2017} provided a criterion which can be used for searching for congruences of $k$-colored partitions. As applications, they obtained that
\begin{align}
p_{-(25r+1)}(25n+24)\equiv p_{-(25r+6)}(25n+19)\equiv p_{-(25r+11)}(25n+14)\equiv0\pmod{25}.\label{LMRS:mod 25 family}
\end{align}

Following the work of Chen et al. as well as Lazarev et al., and relating to our recent work on $k$-colored partitions \cite{FT2016, FT2017}, we further consider arithmetic properties for $k$-colored partitions.

In this paper, we give an elementary proof of \eqref{LMRS:mod 25 family} and also find the following new congruences for $p_{-k}(n)$ modulo 25 for $k\equiv2,7,17\pmod{25}$.
\begin{align}
p_{-(25r+2)}(25n+23)\equiv p_{-(25r+7)}(25n+18)\equiv p_{-(25r+17)}(25n+8)\equiv0\pmod{25}.\label{Our:mod 25 family}
\end{align}

Interestingly, all six infinite families of congruences \eqref{LMRS:mod 25 family}--\eqref{Our:mod 25 family} can be written as a combined result.
\begin{theorem}\label{thm:mod25}
If $r\geq0$ and $k\in\{1,2,6,7,11,17\}$, then for any non-negative integer $n$, we have
\begin{align}
p_{-(25r+k)}(25n+25-k)\equiv0\pmod{25}.\label{mod 25 family}
\end{align}
\end{theorem}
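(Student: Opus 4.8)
The plan is to derive the whole family \eqref{mod 25 family} from the single ``base case'' $r=0$, namely $p_{-k}(25n+25-k)\equiv0\pmod{25}$ for $k\in\{1,2,6,7,11,17\}$, together with the mod $5$ companion congruences asserting $p_{-k}(m)\equiv0\pmod5$ whenever $m\equiv-k\pmod5$. The engine for passing from modulus $k$ to modulus $25r+k$ is the elementary congruence $(q;q)_\infty^{25}\equiv(q^5;q^5)_\infty^5\pmod{25}$, which follows from $(1-X)^{25}\equiv(1-X^5)^5\pmod{25}$ (a case of $(a+5b)^5\equiv a^5\pmod{25}$) applied to each factor $X=q^n$. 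Writing $P_k(q)=(q;q)_\infty^{-k}=\sum_n p_{-k}(n)q^n$ and noting that the congruence above may be inverted and raised to the $(-r)$-th power, this yields
\[
P_{25r+k}(q)=(q;q)_\infty^{-25r}P_k(q)\equiv(q^5;q^5)_\infty^{-5r}P_k(q)\pmod{25}.
\]

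First I would expand $C(q):=(q^5;q^5)_\infty^{-5r}=\sum_{j\ge0}c_j q^{5j}$, a power series in $q^5$, so that comparing coefficients of $q^{N}$ with $N=25n+25-k$ gives
\[
p_{-(25r+k)}(N)\equiv\sum_{j\ge0}c_j\,p_{-k}(N-5j)\pmod{25}.
\]
I would then split this sum according to $j\bmod5$. When $j=5j'$ the argument $N-5j=25(n-j')+25-k$ has base-case shape, so $p_{-k}(N-5j)\equiv0\pmod{25}$. When $j\not\equiv0\pmod5$ I would invoke $C(q)\equiv(q^{25};q^{25})_\infty^{-r}\pmod5$ (from $(q^5;q^5)_\infty^{5}\equiv(q^{25};q^{25})_\infty\pmod5$), whose support lies in $q^{25j'}$, to conclude $c_j\equiv0\pmod5$; simultaneously $N-5j\equiv-k\pmod5$ forces $p_{-k}(N-5j)\equiv0\pmod5$ by the companion congruence, so each such term is $\equiv0\pmod{25}$. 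Summing over $j$ gives $p_{-(25r+k)}(N)\equiv0\pmod{25}$, which is exactly \eqref{mod 25 family}.

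It remains to supply the two inputs. For the mod $5$ companion congruences I would reduce $P_k\bmod5$ to a genuine power of $(q;q)_\infty$: since $(q;q)_\infty^5\equiv(q^5;q^5)_\infty\pmod5$, one has $P_k(q)\equiv(q;q)_\infty^{e_k}(q^5;q^5)_\infty^{-\lceil k/5\rceil}\pmod5$ with $e_k=5\lceil k/5\rceil-k\in\{3,4\}$, and the residue class $m\equiv-k\pmod5$ is controlled purely by $(q;q)_\infty^{e_k}$ at exponents $\equiv e_k\pmod5$. For $e_k=3$, Jacobi's identity $(q;q)_\infty^3=\sum_{j\ge0}(-1)^j(2j+1)q^{j(j+1)/2}$ shows the exponent lands in class $3$ only when $j\equiv2\pmod5$, which forces $2j+1\equiv0\pmod5$; for $e_k=4$, the reduction $(q;q)_\infty^4\equiv(q^5;q^5)_\infty/(q;q)_\infty\pmod5$ sends class $4$ to Ramanujan's $p(5n+4)\equiv0\pmod5$ in \eqref{Rama mod 5}.

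The hard part will be the six base cases $p_{-k}(25n+25-k)\equiv0\pmod{25}$. Here $k=1$ is precisely the computation already carried out in the introduction and $k=2$ is the congruence of Chen et al., so the remaining values $k=6,7,11,17$ carry the substantive new content. For these I expect to need the explicit $5$-dissection of $(q;q)_\infty$ (equivalently of $1/(q;q)_\infty^{k}$) built from the pentagonal number theorem and the Rogers--Ramanujan dissection, applied twice: a first dissection isolates the progression $5n+(25-k)$ reduced modulo $5$ and, as in Ramanujan's identity \eqref{Ramanujan most beautiful identity}, exposes one explicit factor of $5$; a second dissection of the resulting $q$-product, restricted to the sub-progression hitting $25n+25-k$, must be shown to carry a further factor of $5$. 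Tracking these two factors of $5$ simultaneously across all four new values of $k$, and checking that the intermediate theta-quotients have integer coefficients so the divisibilities can actually be read off, is where the real work and the chief risk of error lie.
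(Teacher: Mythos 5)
Your transfer step from $r=0$ to general $r$ is correct, and it is genuinely different from the paper's treatment of that step: writing $1/E_{1}^{25r+k}\equiv E_{5}^{-5r}/E_{1}^{k}\pmod{25}$, splitting the resulting convolution $\sum_{j}c_{j}\,p_{-k}(N-5j)$ according to $j\bmod 5$, and pairing $c_{j}\equiv0\pmod 5$ (for $j\not\equiv0\pmod5$) against a mod-$5$ companion congruence handles all six residues $k$ uniformly. The paper instead splits Case 5 in two: for $k\in\{11,17\}$ it uses the \emph{stronger} mod-$25$ congruences on $5$-term progressions, \eqref{p17:5n+3} and \eqref{p11:5n+4}, while for $k\in\{1,2,6,7\}$ it reruns the huffing-operator machinery on $1/E_{1}^{25r+k}$ and invokes the $5$-adic estimate of Lemma \ref{lemma:estima} for the matrix entries $m(i,j)$. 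Your companion congruences themselves (via Jacobi's identity for $e_{k}=3$ and via \eqref{Rama mod 5} for $e_{k}=4$) are also correctly proved, so this half of your argument is sound and arguably cleaner than the paper's.

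The genuine gap is that the base cases $p_{-k}(25n+25-k)\equiv0\pmod{25}$ for $k\in\{2,6,7,11,17\}$ are never proved; you sketch a ``dissect twice, track two factors of $5$'' plan and explicitly defer the real work. But these base cases \emph{are} the theorem: $k=7$ and $k=17$ (together with an elementary treatment of $k=2$) are precisely the new congruences \eqref{Our:mod 25 family}, and citing Chen et al.\ for $k=2$ imports a modular-forms proof where the paper's stated aim is an elementary one. The paper's Cases 1--4 supply exactly what is missing: the dissection identities \eqref{Rama cont frac}--\eqref{Hir dissection} yield \eqref{gf2:5n+3} for $k=2$; Lemma \ref{key lemma} with the recurrence \eqref{recu formula} yields \eqref{gf6:5n+4} and \eqref{gf7:5n+3} for $k=6,7$, after which reduction mod $25$ leaves a single term $5u\,E_{5}^{c}\sum_{n}p_{-j}(n)q^{n}$ ($j=1$ or $2$) and \eqref{Rama mod 5}, \eqref{partition pairs mod 5} finish the proof; and for $k\in\{11,17\}$ one writes $1/E_{1}^{k}\equiv E_{1}^{25-k}/E_{5}^{5}\pmod{25}$ and expands $E_{1}^{25-k}=E_{25}^{25-k}\left(1/R(q^{5})-q-q^{2}R(q^{5})\right)^{25-k}$, finding that the coefficients of $q^{5n+4}$ (resp.\ $q^{5n+3}$) vanish \emph{identically}. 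Note that this last mechanism does not fit your heuristic: for $k=11,17$ the divisibility comes from vanishing coefficients in a single dissection, not from accumulating two factors of $5$. As it stands, your proposal establishes the transfer step but not the theorem.
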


In 1919, Ramanujan \cite{Ram1927} conjectured that for any integer $\alpha\geq1$,
\begin{align*}
p(5^{\alpha}n+\delta_{\alpha})\equiv0\pmod{5^{\alpha}},
\end{align*}
where $\delta_{\alpha}$ is the reciprocal modulo $5^{\alpha}$ of 24, and this was first proved by Watson \cite{Wat1934}.

Following the strategy of Hirschhorn \cite{HH1981, Hir2017} as well as Garvan \cite{Gar1984}, we will obtain many infinite families of congruences modulo any powers of 5 for $k$-colored partition functions $p_{-k}(n)$ with $k=2,6$, and 7.
\begin{theorem}\label{beau power}
For all integers $n\geq0$ and $\alpha\geq1$, we have
\begin{align}
p_{-2}\left(5^{2\alpha-1}n+\dfrac{7\times5^{2\alpha-1}+1}{12}\right) &\equiv0\pmod{5^{\alpha}},\label{5 power cong:odd}\\
p_{-2}\left(5^{2\alpha}n+\dfrac{11\times5^{2\alpha}+1}{12}\right) &\equiv0\pmod{5^{\alpha+1}}.\label{5 power cong:even}
\end{align}
\end{theorem}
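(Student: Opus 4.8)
The plan is to adapt the classical Watson--Hirschhorn--Garvan method---the one used to prove Ramanujan's congruences $p(5^\alpha n+\delta_\alpha)\equiv0\pmod{5^\alpha}$---to the $2$-colored generating function $\sum_{n\ge0}p_{-2}(n)q^n=1/(q;q)_\infty^2$. The central device is the operator $U$ defined on power series by $U\big(\sum_n a_nq^n\big)=\sum_n a_{5n}q^n$, which extracts the arithmetic progressions appearing in the theorem. It is worth noting that both congruences have already been verified at $\alpha=1$ by results quoted in the introduction: the odd case \eqref{5 power cong:odd} reduces to $p_{-2}(5n+3)\equiv0\pmod5$ from \eqref{partition pairs mod 5}, and the even case \eqref{5 power cong:even} to Chen et al.'s $p_{-2}(25n+23)\equiv0\pmod{25}$. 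These will serve as the base cases of an induction on $\alpha$.

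First I would record the fundamental $5$-dissection. Writing $R=R(q)$ for a suitable normalization of the Rogers--Ramanujan continued fraction, Ramanujan's dissection
$$(q;q)_\infty=(q^{25};q^{25})_\infty\Big(R(q^5)^{-1}-q-q^2R(q^5)\Big)$$
yields, after squaring and inverting, an expansion of $1/(q;q)_\infty^2$ as an eta-quotient in $q^5$ and $q^{25}$ times a Laurent polynomial in $R(q^5)$. This reduces the action of $U$ on the generating function to its action on the finitely many powers $R^j$, for which explicit formulas are available. Since the modulus alternates between $5^{2\alpha-1}$ and $5^{2\alpha}$, I would then introduce auxiliary series $L_\beta$, one for each $\beta\ge1$, by setting
$$\sum_{n\ge0}p_{-2}\Big(5^{\beta}n+r_\beta\Big)q^{n+1}=\Pi_\beta(q)\,L_\beta(q),$$
where $r_\beta\equiv12^{-1}\pmod{5^\beta}$ is the residue isolating the progression (so that $12r_\beta-1$ equals the displayed multiple of $5^\beta$), $\Pi_\beta$ is an explicit eta-quotient prefactor depending only on the parity of $\beta$, and $L_\beta$ has integer coefficients. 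The prefactor and the shift $q^{n+1}$ are chosen precisely so that one application of $U$, combined with the $5$-dissection above, carries $L_\beta$ to $L_{\beta+1}$ up to multiplication by a fixed series; iterating, $L_\beta$ is obtained from $L_1$ by repeated application of a single transfer operator.

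The heart of the matter is the governing recurrence. In the basis $\{1,R,R^2,R^3,R^4\}$---which spans a space stable under the relevant twisted $U$-action---the passage $L_\beta\mapsto L_{\beta+1}$ becomes multiplication by an explicit $5\times5$ integer matrix $M$ (equivalently, a pair of interleaved scalar recurrences, one per parity). The structural fact driving everything is a pattern of $5$-divisibility in the entries of $M$: after weighting the basis vectors by powers of $5$, i.e.\ conjugating by a diagonal matrix $\mathrm{diag}(1,5^{a_1},\dots,5^{a_4})$ with a carefully chosen exponent vector, every entry acquires a prescribed $5$-adic valuation, and this forces the coefficient vector of $L_\beta$ to gain a controlled power of $5$ at each step. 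Matching the two parities and reading off the $R^0$-coefficient then produces $5^{\alpha}$ in the odd case and $5^{\alpha+1}$ in the even case.

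The main obstacle I anticipate is exactly this $5$-adic bookkeeping. Computing $U(R^j)$ and assembling $M$ is mechanical but lengthy and error-prone, and the genuine difficulty lies in locating the correct weighting $\mathrm{diag}(1,5^{a_1},\dots,5^{a_4})$ so that the induction closes with the \emph{sharp} exponents claimed rather than weaker ones. The asymmetry between the two congruences---exponent $\alpha$ versus $\alpha+1$---reflects that the odd-to-even and even-to-odd steps contribute unequally to the valuation, and verifying this unequal gain, together with the fact that it accumulates correctly over the entire iteration, is the delicate point where the argument must be pinned down most carefully.
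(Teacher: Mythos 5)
Your overall strategy---the $5$-dissection via the Rogers--Ramanujan continued fraction, the operator $U$ (the paper's huffing operator $H$), exact transfer identities between successive progressions, and per-step weighted $5$-adic valuation gains with an asymmetry between the two parities---is precisely the Watson/Hirschhorn--Hunt method that the paper follows, and your instinct about where the difficulty sits is correct. But the structural claim at the heart of your argument is false: there is no $5$-dimensional space spanned by $\{1,R,R^2,R^3,R^4\}$ that is stable under the twisted $U$-action, and the transfer $L_\beta\mapsto L_{\beta+1}$ is not multiplication by a $5\times5$ integer matrix. What the paper proves instead (Theorem \ref{theorem:mod power of 5:even-odd}) is that
\begin{align*}
\sum_{n\ge0}p_{-2}\left(5^{2j-1}n+\dfrac{7\times5^{2j-1}+1}{12}\right)q^n
&=\sum_{l\ge1}a(2j-1,l)\,q^{l-1}\dfrac{E_5^{6l-2}}{E_1^{6l}},\\
\sum_{n\ge0}p_{-2}\left(5^{2j}n+\dfrac{11\times5^{2j}+1}{12}\right)q^n
&=\sum_{l\ge1}a(2j,l)\,q^{l-1}\dfrac{E_5^{6l}}{E_1^{6l+2}},
\end{align*}
i.e.\ each generating function is, up to the prefactor, a polynomial in the Hauptmodul $t=qE_5^6/E_1^6$, and the support of the coefficient vector $a(j,\cdot)$ \emph{grows} at every step: the twisted $U$-action sends $t^l$ to $\sum_k m(6l,l+k)\,t^k$ (respectively $\sum_k m(6l+2,l+k)\,t^k$), which can have degree as large as $5l$. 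So the iteration genuinely lives in an infinite-dimensional space with infinite transfer matrices, and no finite integer matrix, however cleverly conjugated by $\mathrm{diag}(1,5^{a_1},\dots,5^{a_4})$, can encode it. (A finite-rank formulation does exist, but only as a rank-$5$ module over the ring $\mathbb{Z}[t]$---this is the degree-five relation between $\zeta=E_1/(qE_{25})$ and $T$ encoded in the recurrence \eqref{recu formula}---so the ``matrix entries'' are then polynomials in $t$, and tracking powers of $t$ is exactly the infinite bookkeeping you were trying to avoid. Also, $R$ itself generates a degree-ten, not degree-five, extension, so powers of $R$ are the wrong basis in any case.)

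Two consequences follow. First, the computation you call ``mechanical but lengthy'' is actually infinite: one needs a uniform, entry-wise valuation bound for all the numbers $m(i,j)$, namely Hirschhorn--Hunt's estimate $\pi_5(m(i,j))\ge\lfloor(5j-i-1)/2\rfloor$ (Lemma \ref{lemma:estima}), which is proved by induction from the five-term recurrence \eqref{recu formula}; your proposal has no substitute for this lemma. Second, the induction cannot have the quoted congruences ($p_{-2}(5n+3)\equiv0\pmod 5$ and $p_{-2}(25n+23)\equiv0\pmod{25}$) as its base cases: what must be carried through the induction are the exact identities above, whose base case is the dissection identity \eqref{gf2:5n+3}, together with the valuation estimates $\pi_5(a(2j-1,k))\ge j+\lfloor(5k-5)/2\rfloor$ and $\pi_5(a(2j,k))\ge j+\lfloor(5k-3)/2\rfloor$ (Lemma \ref{estima:lemma1}), obtained by feeding the Hirschhorn--Hunt bound into the recurrences $a(2j,k)=\sum_i a(2j-1,i)m(6i,i+k)$ and $a(2j+1,k)=\sum_i a(2j,i)m(6i+2,i+k)$. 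Those floor-function weights are the correct incarnation of your ``exponent vector,'' and the mismatch between $\lfloor(5k-5)/2\rfloor$ and $\lfloor(5k-3)/2\rfloor$ is exactly the source of the $5^{\alpha}$ versus $5^{\alpha+1}$ asymmetry you anticipated; but to make any of this close, you must abandon the finite-dimensional framework and prove the two entry-wise/coefficient-wise inductions just described.
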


\begin{theorem}\label{thm:cong high power}
For all integers $n\geq0$ and $\alpha\geq1$, we have
\begin{align}
p_{-6}\left(5^{\alpha}n+\dfrac{3\times5^{\alpha}+1}{4}\right) &\equiv0\pmod{5^{\alpha}},\label{5 power cong 1}\\
p_{-6}\left(5^{\alpha+1}n+\dfrac{11\times5^{\alpha}+1}{4}\right) &\equiv0\pmod{5^{\alpha+1}},\label{5 power cong 2}\\
p_{-6}\left(5^{\alpha+1}n+\dfrac{19\times5^{\alpha}+1}{4}\right) &\equiv0\pmod{5^{\alpha+1}}.\label{5 power cong 3}
\end{align}
\end{theorem}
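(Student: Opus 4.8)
The plan is to adapt the iterative $5$-dissection technique of Watson \cite{Wat1934}, Garvan \cite{Gar1984}, and Hirschhorn \cite{HH1981, Hir2017} to the sixth power of $(q;q)_\infty^{-1}$. Write $f_m := (q^m;q^m)_\infty$, so that $\sum_{n\ge0} p_{-6}(n)q^n = f_1^{-6}$, and let $R = R(q) := \frac{(q^2;q^5)_\infty(q^3;q^5)_\infty}{(q;q^5)_\infty(q^4;q^5)_\infty}$ be the Rogers--Ramanujan quotient. The engine is Ramanujan's $5$-dissection $f_1 = f_{25}\bigl(R(q^5) - q - q^2 R(q^5)^{-1}\bigr)$, from which $f_1^{-6} = f_{25}^{-6}\bigl(R(q^5) - q - q^2 R(q^5)^{-1}\bigr)^{-6}$ expands as $f_{25}^{-6}$ times a Laurent series in $R(q^5)$ whose coefficients are power series in $q^5$. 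Extracting the terms whose $q$-exponent is $\equiv 4 \pmod 5$ and substituting $q^5 \mapsto q$ yields a closed form for $\sum_n p_{-6}(5n+4)q^n$; since the resulting expression is a fixed eta-quotient times a modular function on $\Gamma_0(5)$, whose function field is generated by the Hauptmodul $T := q f_5^6/f_1^6$, it can be written as a fixed eta-quotient times a power series $\sum_{i\ge0} x_1(i)\,T^i$ with $x_1(i)\in\mathbb Z$. The case $\alpha=1$ of \eqref{5 power cong 1}, namely $p_{-6}(5n+4)\equiv0\pmod5$, then follows exactly as Ramanujan deduced \eqref{Rama mod 5} from \eqref{Ramanujan most beautiful identity}, the residue-$4$ component carrying an overall factor $5$.

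A guiding observation is that $f_1^{-6}$ is the very function underlying Watson's treatment of $p(5n+4)$, since \eqref{Ramanujan most beautiful identity} reads $\sum_n p(5n+4)q^n = 5 f_5^5 f_1^{-6}$; thus the dissection operator and its multiplier table for $f_1^{-6}$ are essentially those already governing Watson's congruences, and the present problem amounts to running the same iteration from a different seed and harvesting a different set of residues. Concretely, for the main chain I would set $\sum_n p_{-6}\bigl(5^\alpha n + \tfrac{3\cdot5^\alpha+1}{4}\bigr)q^n$ equal to a fixed eta-quotient times $\sum_{i\ge0} x_\alpha(i)\,T^i$, and show that the passage from level $\alpha$ to level $\alpha+1$ is effected by the operator $\mathcal U$ that extracts a fixed residue class modulo $5$, multiplies by an eta-quotient, and substitutes $q^5 \mapsto q$; this produces a fixed integer matrix $M = \bigl(m(i,j)\bigr)$ with $x_{\alpha+1}(i) = \sum_j m(i,j)\,x_\alpha(j)$. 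The three residues $\tfrac{3\cdot5^\alpha+1}{4}$, $\tfrac{11\cdot5^\alpha+1}{4}$, $\tfrac{19\cdot5^\alpha+1}{4}$ all reduce to $4 \pmod 5$; the first is nested modulo increasing powers of $5$ (namely $4, 19, 94, \dots$) and is produced by iterating the extraction that continues the main chain, while the other two peel off through the two companion extractions that survive when the level-$\alpha$ series is dissected one further step.

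The crux is a $5$-adic valuation estimate. I would prove by induction on $\alpha$ that $\nu_5\bigl(x_\alpha(i)\bigr) \ge \alpha$ for every $i\ge0$, from which \eqref{5 power cong 1} is immediate: the eta-quotient prefactor is a unit in $\mathbb Z_{(5)}[[q]]$ and each coefficient of $q^n$ is a finite $\mathbb Z$-combination of the $x_\alpha(i)$, so $5^\alpha$ divides every value $p_{-6}\bigl(5^\alpha n + \tfrac{3\cdot5^\alpha+1}{4}\bigr)$. The inductive step rests on the key lemma that each relevant extraction operator raises the $5$-adic valuation of the coefficient vector by at least one; this is the $p_{-6}$-analogue of Garvan's central lemma and reflects how the factor $5$ in Ramanujan's dissection propagates through each application of $M$. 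Unlike the $p_{-2}$ situation of Theorem \ref{beau power}, the valuation here increases at every level rather than every second level, so the modulus is $5^\alpha$ along the entire main chain. Finally, \eqref{5 power cong 2} and \eqref{5 power cong 3} follow because the additional dissection isolating the side residues $\tfrac{11\cdot5^\alpha+1}{4}$ and $\tfrac{19\cdot5^\alpha+1}{4}$ retains only entries of $5$-adic valuation one larger, supplying the extra factor that yields modulus $5^{\alpha+1}$.

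I expect the main obstacle to be this valuation lemma, together with the honest computation of $M$ from the sixth-power dissection: expanding $\bigl(R(q^5) - q - q^2 R(q^5)^{-1}\bigr)^{-6}$ inflates the range of $T$-powers that occur, so $\mathcal U$ no longer acts on a small fixed set of monomials and the bookkeeping of valuations across the two-dimensional array $m(i,j)$ becomes delicate. I would control this by first normalizing with a suitable power of $f_5/f_{25}$ so that $\mathcal U$ preserves the space of power series in $T$ of bounded order, and then establishing the valuation bound through an explicit recurrence for the rows of $M$ modulo successive powers of $5$, in parallel with the tables computed by Garvan and Hirschhorn. Verifying that the two side branches gain exactly one additional power of $5$ --- so that the induction closes with the moduli stated in \eqref{5 power cong 2} and \eqref{5 power cong 3} --- is the most computation-heavy step, and is precisely where the fine structure of the dissection coefficients must be used.
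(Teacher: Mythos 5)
Your overall strategy coincides with the paper's: seed the iteration with the $5$-dissection of $1/E_1^6$ (the paper's \eqref{gf6:5n+4}), write each level of the main chain as a fixed eta-quotient times a series $\sum_i x_\alpha(i)T^i$, pass from level $\alpha$ to $\alpha+1$ by extracting one residue class with the huffing operator and the Hirschhorn--Hunt matrix $m(i,j)$, and finish with a $5$-adic valuation bound, handling \eqref{5 power cong 2}--\eqref{5 power cong 3} by one further dissection. However, there is a genuine gap at the crux. The inductive hypothesis you propose, $\nu_5(x_\alpha(i))\geq\alpha$ uniformly in $i$, is too weak to propagate, and the ``key lemma'' you rest it on --- that the extraction operator raises the $5$-adic valuation of the whole coefficient vector by at least one --- is false as stated. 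The matrix $m(i,j)$ has infinitely many entries of valuation zero: from the recurrence \eqref{recu formula} and $m(5,1)=1$ one gets $m(5j,j)=1$ for every $j$, and the bound of Lemma \ref{lemma:estima}, $\pi_5(m(i,j))\geq\lfloor(5j-i-1)/2\rfloor$, guarantees nothing precisely when $i$ is close to $5j$. In the relevant recurrence $x_{\alpha+1}(k)=\sum_i x_\alpha(i)\,m(6i+6,k+i+1)$, the term $i=5k-1$ contributes $x_\alpha(5k-1)\cdot m(30k,6k)=x_\alpha(5k-1)\cdot 1$, with no gain at all from the matrix; so from $\nu_5(x_\alpha(i))\geq\alpha$ alone you can only conclude $\nu_5(x_{\alpha+1}(k))\geq\alpha$, and the induction does not close.

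The repair is exactly the index-dependent strengthening that the paper proves (its inequality \eqref{estima1}): $\nu_5(x_\alpha(k))\geq\alpha+\lfloor(5k-5)/2\rfloor$. The linear growth in $k$ compensates the decay of the bound on $m(6i+6,k+i+1)$ in $i$: setting $g(i,k)=\lfloor(5i-5)/2\rfloor+\lfloor(5k-i-2)/2\rfloor$, increasing $i$ by one gains at least $2$ from the first term and loses at most $1$ from the second, so the minimum over $i$ occurs at $i=1$ and equals $1+\lfloor(5k-5)/2\rfloor$, which is what yields one new power of $5$ per level. This refined form also gives the side branches almost for free: for $k\geq2$ the coefficients have valuation $\geq\alpha+2$, so modulo $5^{\alpha+1}$ only the $k=1$ term survives, and the paper then concludes \eqref{5 power cong 2}--\eqref{5 power cong 3} by reducing $E_5^6/E_1^{12}\equiv E_5^4/E_1^2\pmod 5$ and invoking the Hammond--Lewis congruences \eqref{partition pairs mod 5}, rather than by computing a further dissection and tracking its valuations as you suggest. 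Your plan is salvageable, but only after replacing the uniform valuation hypothesis by the two-variable estimate; as written, the central lemma on which your induction rests is not true.
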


\begin{theorem}\label{thm case 7:modulo powers of 5}
For all integers $n\geq0$ and $\alpha\geq1$, we have
\begin{align}
p_{-7}\left(5^{2\alpha-1}n+\dfrac{13\times5^{2\alpha-1}+7}{24}\right) &\equiv0\pmod{5^{\alpha}},\label{5 power cong:odd 5n+2}\\
p_{-7}\left(5^{2\alpha}n+\dfrac{17\times5^{2\alpha}+7}{24}\right) &\equiv0\pmod{5^{\alpha+1}},\label{5 power cong:5n+3}\\
p_{-7}\left(5^{2\alpha}n+\dfrac{61\times5^{2\alpha-1}+7}{24}\right) &\equiv0\pmod{5^{\alpha+1}},\label{5 power cong:5n+2}\\
p_{-7}\left(5^{2\alpha}n+\dfrac{109\times5^{2\alpha-1}+7}{24}\right) &\equiv0\pmod{5^{\alpha+1}}.\label{5 power cong:5n+4}
\end{align}
\end{theorem}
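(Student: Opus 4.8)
The plan is to follow the Hirschhorn--Garvan machinery signposted in the introduction, specialized to the generating function $\sum_{n\ge 0}p_{-7}(n)q^{n}=(q;q)_{\infty}^{-7}$. Because every offset in the statement has denominator $24$ with numerator $\equiv 7\pmod{24}$ (matching the color weight $k=7$), I would first normalize by working with $q^{7}/(q^{24};q^{24})_{\infty}^{7}$, so that extracting an arithmetic progression modulo a power of $5$ in the exponent $24n+7$ corresponds to iterating the operator $U_{5}$ defined by $U_{5}\bigl(\sum a_{n}q^{n}\bigr)=\sum a_{5n}q^{n}$. The engine driving each step is the $5$-dissection of $(q;q)_{\infty}$ coming from the Rogers--Ramanujan continued fraction $R(q)=\frac{(q;q^{5})_{\infty}(q^{4};q^{5})_{\infty}}{(q^{2};q^{5})_{\infty}(q^{3};q^{5})_{\infty}}$, namely $(q;q)_{\infty}=(q^{25};q^{25})_{\infty}\bigl(R(q^{5})^{-1}-q-q^{2}R(q^{5})\bigr)$, which lets me compute $U_{5}$ explicitly on any eta-quotient of the relevant shape.

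Next I would introduce the Hauptmodul $t=t(q)=q\,(q^{5};q^{5})_{\infty}^{6}/(q;q)_{\infty}^{6}$ for $\Gamma_{0}(5)$ and expand the successive images under $U_{5}$ as power series in $t$. Concretely, I expect to define two interleaved families of functions $L_{\alpha}(q)$ and $M_{\alpha}(q)$ (one attached to the odd exponents $2\alpha-1$, the other to the even exponents $2\alpha$ appearing in the statement) and to prove a coupled pair of recurrences of the form $L_{\alpha+1}=U_{5}(\text{something built from }M_{\alpha})$ and $M_{\alpha+1}=U_{5}(\text{something built from }L_{\alpha+1})$, with all structure constants explicit integers read off from the dissection above. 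The coefficients of these $t$-expansions are the quantities whose $5$-adic valuations I must control; the four congruences then correspond to reading off four residue classes modulo $5$ (one yielding the ``main'' modulus $5^{\alpha}$ in \eqref{5 power cong:odd 5n+2} and three yielding the sharper $5^{\alpha+1}$) at the appropriate stage of the iteration.

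The heart of the argument is an induction on $\alpha$ showing that the coefficient vectors in the $t$-expansions of $L_{\alpha}$ and $M_{\alpha}$ gain one extra factor of $5$ at each full step. I would package this as a statement that a fixed power of $5$, whose exponent grows linearly in the $t$-degree and in $\alpha$, divides each coefficient; the recurrence then propagates the bound because each application of $U_{5}$ to a monomial $t^{j}$ produces a combination of the $t^{i}$ whose coefficients carry a guaranteed power of $5$ (the arithmetic analogue of the divisibility lemmas of Watson and Garvan for $p(n)$). The alternation between the two shapes $L_{\alpha}$ and $M_{\alpha}$ is precisely what forces the odd/even split in the exponent of $5$ across \eqref{5 power cong:odd 5n+2}--\eqref{5 power cong:5n+4}.

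The main obstacle I anticipate is exactly this divisibility bookkeeping: since $U_{5}$ mixes the coefficients, I must pin down the precise rate at which powers of $5$ accumulate and verify that the three secondary progressions genuinely acquire the extra factor of $5$ beyond the main one. This rests on an explicit computation of the $U_{5}$-images of the low powers of $t$ (equivalently, the level-$5$ modular equation relating $t(q)$ and $t(q^{5})$) and on keeping the two interleaved inductions in step. Once the two-step recurrence and its divisibility lemma are established, each of the four claimed congruences follows by specializing $\alpha$ and extracting the appropriate low-order coefficient of the relevant $U_{5}$-iterate.
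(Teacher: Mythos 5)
Your machinery is the paper's machinery in different clothing: your $U_{5}$ is Hirschhorn's huffing operator $H$ composed with the substitution $q^{5}\mapsto q$; your Hauptmodul $t=qE_{5}^{6}/E_{1}^{6}$ is (up to reciprocal and rescaling) the paper's $T=E_{5}^{6}/(q^{5}E_{25}^{6})$; your ``$U_{5}$-images of low powers of $t$'' are exactly Lemma \ref{key lemma}, $H(\zeta^{-i})=\sum_{j}m(i,j)T^{-j}$; your interleaved families $L_{\alpha},M_{\alpha}$ with coupled recurrences are the paper's matrices $c(2j-1,\cdot)$ and $c(2j,\cdot)$; and your valuation induction is the paper's estimate $\pi_{5}(c(2j-1,k))\geq j+\lfloor(5k-5)/2\rfloor$, $\pi_{5}(c(2j,k))\geq j+\lfloor(5k-3)/2\rfloor$, which rests on the Hirschhorn--Hunt bound of Lemma \ref{lemma:estima}. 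Carried out, this plan does prove \eqref{5 power cong:odd 5n+2} (odd-stage valuation bound) and also \eqref{5 power cong:5n+3}, since the even stage of the iteration is precisely the $5n+3$ class of the odd stage and its coefficients have valuation at least $\alpha+1$.

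The genuine gap concerns \eqref{5 power cong:5n+2} and \eqref{5 power cong:5n+4}. These are the $5n+2$ and $5n+4$ classes of the odd-stage series, and the iteration never visits them: at each step one huffs out a single residue class and continues from it, and the modular equation (i.e.\ the action of $U_{5}$ on powers of $t$) computes exactly that one class and nothing else. So the tool you designate for ``verifying that the three secondary progressions acquire the extra factor of $5$'' cannot, even in principle, reach two of the three. The paper's missing ingredient is of a different nature: modulo $5^{\alpha+1}$ the odd-stage series collapses to its first term $c(2\alpha-1,1)E_{5}^{5}/E_{1}^{12}$ (every term with $k\geq2$ has valuation $\geq\alpha+2$); the binomial congruence $E_{1}^{5}\equiv E_{5}\pmod{5}$ rewrites this as $c(2\alpha-1,1)E_{5}^{3}\sum_{n\geq0}p_{-2}(n)q^{n}$; and then all three secondary congruences fall out of the Hammond--Lewis/Baruah--Sarmah congruences $p_{-2}(5n+2)\equiv p_{-2}(5n+3)\equiv p_{-2}(5n+4)\equiv0\pmod{5}$, i.e.\ \eqref{partition pairs mod 5}: the factor $c(2\alpha-1,1)$ supplies $5^{\alpha}$ and the $p_{-2}$ congruence supplies the extra $5$. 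You must either add this reduction, or replace it by a computation of the full $5$-dissection (all residue classes, not merely the $H$-image) of $E_{5}^{5}/E_{1}^{12}$ and a check that its $5n+2$ and $5n+4$ parts are divisible by $5$ --- which is tantamount to reproving \eqref{partition pairs mod 5}. A minor further slip: the normalization should be $q^{-7}(q^{24};q^{24})_{\infty}^{-7}$ rather than $q^{7}(q^{24};q^{24})_{\infty}^{-7}$, so that $24N-7$ is the exponent that becomes divisible by powers of $5$; in any case that normalization plays no role in the elementary argument.
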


The rest of the paper is organized as follows. In Section \ref{lemmas section}, we present the background material on the $H$ operator and some useful lemmas. In Section \ref{sec:elementary pf}, we will provide an elementary proof for Theorem \ref{thm:mod25}, and in Section \ref{mod5 power} we will prove Theorems \ref{beau power}--\ref{thm case 7:modulo powers of 5}.

\section{Preliminary results}\label{lemmas section}
To prove the main results of this paper, we introduce some useful notations and terminology on $q$-series.

For convenience, we denote that
\begin{align*}
E_{j} :=(q^{j};q^{j})_{\infty}.
\end{align*}
Denote
\begin{align*}
R(q)=\dfrac{(q;q^{5})_{\infty}(q^{4};q^{5})_{\infty}}{(q^{2};q^{5})_{\infty}(q^{3};q^{5})_{\infty}}.
\end{align*}
From \cite[Eqs. (8.4.1), (8.4.2) and (8.4.4)]{Hir2017}, we see that
\begin{align}
\dfrac{E_{1}}{E_{25}} &=\dfrac{1}{R(q^{5})}-q-q^{2}R(q^{5}),\label{Rama cont frac}\\
\dfrac{E_{5}^{6}}{E_{25}^{6}} &=\dfrac{1}{R(q^{5})^{5}}-11q^{5}-q^{10}R(q^{5})^{5},\label{Rama cont frac 5}\\
\dfrac{1}{E_{1}} &=\dfrac{E_{25}^{5}}{E_{5}^{6}}\Bigg(\dfrac{1}{R(q^{5})^{4}}+\dfrac{q}{R(q^{5})^{3}}+\dfrac{2q^{2}}{R(q^{5})^{2}}+\dfrac{3q^{3}}{R(q^{5})}+5q^{4}\notag\\
 &\quad-3q^{5}R(q^{5})+2q^{6}R(q^{5})^{2}-q^{7}R(q^{5})^{3}+q^{8}R(q^{5})^{4}\Bigg).\label{Hir dissection}
\end{align}

Define
\begin{align*}
\zeta=\dfrac{E_{1}}{qE_{25}}, \quad T=\dfrac{E_{5}^{6}}{q^{5}E_{25}^{6}}.
\end{align*}

Following Hirschhorn \cite{HH1981, Hir2017}, we introduce a ``huffing'' operator $H$, which operates on a series by picking out those terms of the form $q^{5n}$, and huffing the rest away. That is,
\begin{align*}
H\left(\sum_{n=0}^{\infty}a_{n}q^{n}\right)=\sum_{n=0}^{\infty}a_{5n}q^{5n}.
\end{align*}
As in Hirschhorn \cite{Hir2017}, we define an infinite matrix $\{m(i,j)\}_{i,j\geq1}$ by
\begin{align*}
\begin{pmatrix}
5 &0 &0 &0 &0 &0 &\cdots\\ 2\times5 &5^{3} &0 &0 &0 &0 &\cdots\\ 9 &3\times5^{3} &5^{5} &0 &0 &0 &\cdots\\ 4 &22\times5^{2} &4\times5^{5} &5^{7} &0 &0 &\cdots\\ 1 &4\times5^{3} &8\times5^{5} &5^{8} &5^{9} &0 &\cdots\\ \vdots &\vdots &\vdots &\vdots &\vdots &\vdots &\ddots
\end{pmatrix}
\end{align*}
and for $i\geq6$, $m(i,1)=0$, and for $j\geq2$,
\begin{align}
m(i,j) &=25m(i-1,j-1)+25m(i-2,j-1)+15m(i-3,j-1)\notag\\
 &\quad+5m(i-4,j-1)+m(i-5,j-1).\label{recu formula}
\end{align}
By induction, it follows immediately that
\begin{lemma}[\cite{Hir2017}]
We have
\begin{itemize}
\item $m(i,j)=0$ for $j>i$.
\item $m(i,j)=0$ for $i>5j$.
\end{itemize}
\end{lemma}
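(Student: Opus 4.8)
The plan is to prove both assertions simultaneously by induction on the column index $j$, exploiting the fact that the recurrence \eqref{recu formula} expresses each entry of column $j$ as a $\numset{Z}$-linear combination of the five entries $m(i-1,j-1),\dots,m(i-5,j-1)$ of column $j-1$. Throughout I would adopt the convention that $m(i,j)=0$ whenever $i\leq0$ or $j\leq0$, so that \eqref{recu formula} can be applied uniformly without worrying about out-of-range indices. Since the recurrence is valid for all $j\geq2$ while the first column is prescribed explicitly (together with the boundary condition $m(i,1)=0$ for $i\geq6$), the induction will start at $j=1$ and pass from column $j-1$ to column $j$ via \eqref{recu formula}.

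For the base case $j=1$, the first assertion asks that $m(i,1)=0$ whenever $1>i$, which is vacuous because no positive integer $i$ satisfies $i<1$; the second assertion asks that $m(i,1)=0$ whenever $i>5$, which is precisely the imposed boundary condition $m(i,1)=0$ for $i\geq6$. Hence both statements hold at $j=1$.

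For the inductive step I would fix $j\geq2$ and assume both assertions for column $j-1$. To propagate the first assertion, suppose $i<j$, i.e.\ $i\leq j-1$; then for each $t\in\{1,2,3,4,5\}$ one has $i-t\leq i-1\leq j-2<j-1$, so the inductive hypothesis forces $m(i-t,j-1)=0$, and \eqref{recu formula} yields $m(i,j)=0$. To propagate the second assertion, suppose $i>5j$, i.e.\ $i\geq5j+1$; then for each such $t$ one has $i-t\geq i-5\geq 5j-4>5j-5=5(j-1)$, so again the inductive hypothesis gives $m(i-t,j-1)=0$, whence \eqref{recu formula} forces $m(i,j)=0$. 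This closes the induction and establishes both claims.

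The argument is entirely elementary, which is why the excerpt simply asserts it follows ``by induction''; the only points demanding care are the zero-convention for non-positive indices and the two index inequalities in the inductive step. In particular, the estimate $i-5>5(j-1)$ is exactly what makes the bound $i>5j$ shift correctly to $i>5(j-1)$ as one moves back a column, so I do not anticipate any genuine obstacle beyond this bookkeeping.
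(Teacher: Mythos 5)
Your proof is correct and matches the paper's approach: the paper simply asserts the lemma follows ``by induction'' from the recurrence \eqref{recu formula} and the prescribed first column, and your argument is exactly that induction on $j$, carried out with the necessary zero-convention for out-of-range indices. The two index estimates ($i-t\leq j-2$ and $i-5>5(j-1)$) are handled correctly, so nothing is missing.
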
\label{zero vaules}

The following lemma is important for our proof.
\begin{lemma}[Eq. (6.4.9), \cite{Hir2017}]\label{key lemma}
For $j\geq1$, we have
\begin{align*}
H\left(\dfrac{1}{\zeta^{i}}\right)=\sum_{j=1}^{\infty}\dfrac{m(i,j)}{T^{j}}=\sum_{j=1}^{i}\dfrac{m(i,j)}{T^{j}}.
\end{align*}
\end{lemma}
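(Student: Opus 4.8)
The plan is to reduce the whole statement to one algebraic relation between $\zeta$ and $T$, and then to run an induction on $i$ whose inductive step is literally the defining recurrence \eqref{recu formula} of the matrix $\{m(i,j)\}$. Throughout I abbreviate $t:=qR(q^{5})$.

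First I would record the fundamental relation. Dividing \eqref{Rama cont frac} by $q$ gives $\zeta=1/t-1-t$, and dividing \eqref{Rama cont frac 5} by $q^{5}$ gives $T=1/t^{5}-11-t^{5}$. Setting $s:=\zeta+1=1/t-t$ and applying the elementary identity $a^{5}-b^{5}=s^{5}+5s^{3}+5s$, valid whenever $a-b=s$ and $ab=1$ (here $a=1/t$, $b=t$), one obtains $T=s^{5}+5s^{3}+5s-11$; expanding $s=\zeta+1$ yields
\begin{equation*}
T=\zeta^{5}+5\zeta^{4}+15\zeta^{3}+25\zeta^{2}+25\zeta.
\end{equation*}
Dividing by $\zeta^{5}$ and then by $\zeta^{i-5}$ rearranges this, for every $i\geq5$, into
\begin{equation*}
\frac{1}{\zeta^{i}}=\frac{1}{T}\left(\frac{1}{\zeta^{i-5}}+\frac{5}{\zeta^{i-4}}+\frac{15}{\zeta^{i-3}}+\frac{25}{\zeta^{i-2}}+\frac{25}{\zeta^{i-1}}\right).
\end{equation*}

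The second ingredient is that $H$ is linear over Laurent series in $q^{5}$: since $T$, and hence $1/T$, is a Laurent series in $q^{5}$, we have $H(T^{-1}g)=T^{-1}H(g)$ for any series $g$. Applying $H$ to the previous display therefore gives, for $i\geq6$,
\begin{equation*}
H\!\left(\frac{1}{\zeta^{i}}\right)=\frac{1}{T}\left(H\!\left(\frac{1}{\zeta^{i-5}}\right)+5H\!\left(\frac{1}{\zeta^{i-4}}\right)+15H\!\left(\frac{1}{\zeta^{i-3}}\right)+25H\!\left(\frac{1}{\zeta^{i-2}}\right)+25H\!\left(\frac{1}{\zeta^{i-1}}\right)\right).
\end{equation*}
I would then prove $H(1/\zeta^{i})=\sum_{j\geq1}m(i,j)/T^{j}$ by strong induction on $i$, with $i=1,\dots,5$ as base cases. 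For $i\geq6$, substituting the inductive hypothesis into the right-hand side and collecting the coefficient of $1/T^{j}$, the prefactor $1/T$ shifts $j\mapsto j-1$, so matching terms reproduces exactly \eqref{recu formula} for $j\geq2$, while the absence of a constant term in each $H(1/\zeta^{i'})$ with $i'\geq1$ forces $m(i,1)=0$ for $i\geq6$. These are precisely the defining relations of the matrix, so the inductive step is automatic. The truncation $\sum_{j\geq1}=\sum_{j=1}^{i}$ is then immediate from the preceding lemma, since $m(i,j)=0$ for $j>i$.

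The main work, and the step I expect to be the real obstacle, is verifying the five base cases $i=1,\dots,5$ (the first five rows of the matrix) by direct computation. Here I would use \eqref{Hir dissection} to write $1/\zeta=P(t)/T$ with
\begin{equation*}
P(t)=\frac{1}{t^{4}}+\frac{1}{t^{3}}+\frac{2}{t^{2}}+\frac{3}{t}+5-3t+2t^{2}-t^{3}+t^{4},
\end{equation*}
so that $H(1/\zeta^{i})=T^{-i}H(P(t)^{i})$. Because $t^{m}=q^{m}R(q^{5})^{m}$ has all $q$-exponents congruent to $m\pmod5$, the operator $H$ simply extracts from $P(t)^{i}$ the terms $t^{5\ell}$, leaving a Laurent polynomial in $t^{5}$. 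Since $P(t)$ is invariant under the involution $t\mapsto-1/t$ (which also fixes $T$), that Laurent polynomial is invariant under $t^{5}\mapsto-1/t^{5}$ and is therefore genuinely a polynomial in $T=1/t^{5}-11-t^{5}$; expanding $P(t)^{i}$ and re-expressing in $T$ produces the entries $m(i,j)$ for $1\leq i\leq5$. This expansion is elementary but is where all the numerical content of the first five rows is generated, and organizing it cleanly is the crux of the argument.
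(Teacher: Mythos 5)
Your proof is correct, but note that the paper itself contains no proof to compare against: Lemma \ref{key lemma} is simply quoted from Hirschhorn's book \cite{Hir2017} (Eq.~(6.4.9)), and the matrix $\{m(i,j)\}$ together with the recurrence \eqref{recu formula} is likewise taken as given data. What you have done is reconstruct, from the identities \eqref{Rama cont frac}, \eqref{Rama cont frac 5} and \eqref{Hir dissection} that the paper does record, essentially the Hirschhorn--Hunt argument itself. Your central relation $T=\zeta^{5}+5\zeta^{4}+15\zeta^{3}+25\zeta^{2}+25\zeta$ is correct: it follows, exactly as you say, from $\zeta=1/t-1-t$, $T=1/t^{5}-11-t^{5}$ and the identity $a^{5}-b^{5}=s^{5}+5s^{3}+5s$ when $a-b=s$ and $ab=1$. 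Dividing by $T\zeta^{i}$ and applying $H$ (legitimate since $1/T$ is a series in $q^{5}$) makes the inductive step for $i\geq6$ reproduce \eqref{recu formula} coefficient by coefficient, with the $1/T$ prefactor accounting for the convention $m(i,1)=0$ for $i\geq6$; the truncation to $\sum_{j=1}^{i}$ is the preceding vanishing lemma, as you say. Your mechanism for the base cases is also sound: $H$ extracts from $P(t)^{i}$ the powers $t^{5\ell}$, and the invariance of $P$ and $T$ under $t\mapsto-1/t$ forces the extract to be a polynomial in $1/t^{5}-t^{5}=T+11$, hence in $T$, of degree $\lfloor 4i/5\rfloor\leq i-1$, which is precisely the shape $\sum_{j=1}^{i}m(i,j)T^{i-j}$ required. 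Spot checks confirm agreement with the paper's matrix: for $i=1$ the only exponent of $P$ divisible by $5$ is the constant term, giving $H(1/\zeta)=5/T$; for $i=2$ one finds $H(P^{2})=10(1/t^{5}-t^{5})+15=10T+125$, i.e.\ $10/T+125/T^{2}$; and the coefficient of $t^{-10}$ in $P^{3}$ is $3c_{-4}^{2}c_{-2}+3c_{-4}c_{-3}^{2}=9=m(3,1)$. The only unfinished business is what you flag yourself: the full expansions for rows $3$, $4$, $5$. Since your symmetry argument already guarantees the output has the right form, this is finite mechanical arithmetic rather than a conceptual gap, but it must be carried out (or delegated to \cite{HH1981}, where these rows are computed) before the proof is literally complete.
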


Employing the binomial theorem, we can easily establish the following congruence, which will be frequently used without explicit mention.
\begin{lemma}
If $p$ is a prime, $\alpha$ is a positive integer, then
\begin{align*}
\f{\alpha}^{p} &\equiv\f{p\alpha}\pmod{p},\\
\f{1}^{p^{\alpha}} &\equiv\f{p}^{p^{\alpha-1}}\pmod{p^{\alpha}}.
\end{align*}
\end{lemma}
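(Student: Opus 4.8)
The plan is to prove the two congruences separately, treating the first as the base case of an induction that yields the second. Throughout, I would interpret all congruences between $q$-series coefficient-wise: two power series with integer coefficients are congruent modulo $m$ precisely when all corresponding coefficients agree modulo $m$. Since each factor $(1-q^{\alpha n})$ affects only coefficients of degree $\geq\alpha n$, every coefficient of the infinite products below is a finite integer expression, so the formal manipulations are legitimate, and coefficient-wise congruences are preserved under products and under taking $p$-th powers.

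For the first congruence, I would argue factor by factor in the product $\f{\alpha}=\prod_{n\geq1}(1-q^{\alpha n})$. The underlying elementary fact is the ``freshman's dream'': since $p\mid\binom{p}{j}$ for $1\leq j\leq p-1$, the binomial theorem gives $(1-x)^{p}\equiv1-x^{p}\pmod{p}$. Applying this with $x=q^{\alpha n}$ in each factor yields
\begin{align*}
\f{\alpha}^{p}=\prod_{n=1}^{\infty}(1-q^{\alpha n})^{p}\equiv\prod_{n=1}^{\infty}(1-q^{p\alpha n})=\f{p\alpha}\pmod{p},
\end{align*}
which is the first claim.

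For the second congruence I would use induction on $\alpha$, the engine being the standard lifting lemma: if $A\equiv B\pmod{p^{k}}$, then $A^{p}\equiv B^{p}\pmod{p^{k+1}}$. To establish it, write $A=B+p^{k}C$ and expand $A^{p}-B^{p}=\sum_{j=1}^{p}\binom{p}{j}B^{p-j}(p^{k}C)^{j}$; the $j=1$ term carries a factor $\binom{p}{1}p^{k}=p^{k+1}$, each term with $2\leq j\leq p-1$ carries $p\cdot p^{jk}=p^{1+jk}$, and the $j=p$ term carries $p^{kp}$, all divisible by $p^{k+1}$ once $k\geq1$. The base case $\alpha=1$ is exactly the first congruence specialized to $\alpha=1$, namely $\f{1}^{p}\equiv\f{p}\pmod{p}$. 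For the inductive step, assuming $\f{1}^{p^{\alpha-1}}\equiv\f{p}^{p^{\alpha-2}}\pmod{p^{\alpha-1}}$, raising both sides to the $p$-th power via the lifting lemma gives $\f{1}^{p^{\alpha}}\equiv\f{p}^{p^{\alpha-1}}\pmod{p^{\alpha}}$, which completes the induction.

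I do not anticipate a genuine obstacle, since both statements are classical consequences of the binomial theorem. The only point demanding care is the justification that the coefficient-wise congruences survive the passage to infinite products and to $p$-th powers, which is precisely why I would make the formal-power-series interpretation explicit at the outset rather than appealing to analytic convergence for $|q|<1$.
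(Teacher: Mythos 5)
Your proof is correct and follows exactly the route the paper intends: the paper offers no detailed argument, merely remarking that the lemma follows by ``employing the binomial theorem,'' and your freshman's-dream computation for the first congruence plus the standard $p$-th-power lifting induction for the second is precisely the standard fleshing-out of that remark. Your explicit attention to the formal-power-series (coefficient-wise) interpretation is a sound addition, not a deviation.
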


\section{Congruences for $k$-colored partitions modulo 25}\label{sec:elementary pf}
\begin{proof}[Proof of Theorem \ref{thm:mod25}]
We will first prove Theorem \ref{thm:mod25} for $r=0$, then we will explain its connection with the remaining cases.

\textit{Case 1}. For $k=2$,
\begin{align*}
\sum_{n=0}^{\infty}p_{-2}(n)q^{n} &=\dfrac{1}{E_{1}^{2}}\\
 &=\dfrac{E_{25}^{10}}{E_{5}^{12}}\Bigg(\dfrac{1}{R(q^{5})^{8}}+\dfrac{2q}{R(q^{5})^{7}}
 +\dfrac{5q^{2}}{R(q^{5})^{6}}+\dfrac{10q^{3}}{R(q^{5})^{5}}+\dfrac{20q^{4}}{R(q^{5})^{4}}+\dfrac{16q^{5}}{R(q^{5})^{3}}\\
 &\quad+\dfrac{27q^{6}}{R(q^{5})^{2}}+\dfrac{20q^{7}}{R(q^{5})}+15q^{8}-20q^{9}R(q^{5})+27q^{10}R(q^{5})^{2}-16q^{11}R(q^{5})^{3}\\
 &\quad+20q^{12}R(q^{5})^{4}-10q^{13}R(q^{5})^{5}+5q^{14}R(q^{5})^{6}-2q^{15}R(q^{5})^{7}+q^{16}R(q^{5})^{8}\Bigg).
\end{align*}

Invoking \eqref{Rama cont frac 5}, we find that
\begin{align}
\sum_{n=0}^{\infty}p_{-2}(5n+3)q^{n} &=10\dfrac{E_{5}^{4}}{E_{1}^{6}}+125q\dfrac{E_{5}^{10}}{E_{1}^{12}}.\label{gf2:5n+3}
\end{align}

Furthermore, we get, (all the following congruences are modulo 25 in this section unless otherwise specified)
\begin{align*}
\sum_{n=0}^{\infty}p_{-2}(5n+3)q^{n} &\equiv10\dfrac{E_{5}^{3}}{E_{1}}=10E_{5}^{3}\sum_{n=0}^{\infty}p(n)q^{n}.
\end{align*}
It follows from \eqref{Rama mod 5} that
\begin{align*}
p_{-2}(25n+23)\equiv 0.
\end{align*}

\textit{Case 2}. For $k=17$, by \eqref{Hir dissection}, we get
\begin{align*}
\sum_{n=0}^{\infty}p_{-17}(n)q^{n} &=\dfrac{1}{E_{1}^{17}}=\dfrac{E_{1}^{8}}{E_{1}^{25}}\equiv\dfrac{E_{1}^{8}}{E_{5}^{5}}\\
 &=\dfrac{E_{25}^{8}}{E_{5}^{5}}\left(\dfrac{1}{R(q^{5})}-q-q^{2}R(q^{5})\right)^{8}\\
 &=\dfrac{E_{25}^{8}}{E_{5}^{5}}\Bigg(\dfrac{1}{R(q^{5})^{8}}-\dfrac{8q}{R(q^{5})^{7}}+\dfrac{20q^{2}}{R(q^{5})^{6}}-\dfrac{70q^{4}}{R(q^{5})^{4}}+\dfrac{56q^{5}}
 {R(q^{5})^{3}}+\dfrac{112q^{6}}{R(q^{5})^{2}}\\
 &\quad-\dfrac{120q^{7}}{R(q^{5})}-125q^{8}+120q^{9}R(q^{5})+112q^{10}R(q^{5})^{2}-56q^{11}R(q^{5})^{3}\\
 &\quad-70q^{12}R(q^{5})^{4}+20q^{14}R(q^{5})^{6}+8q^{15}R(q^{5})^{7}+q^{16}R(q^{5})^{8}\Bigg).
\end{align*}

It follows that
\begin{align*}
\sum_{n=0}^{\infty}p_{-17}(5n+3)q^{n}\equiv0.
\end{align*}

So
\begin{align}
p_{-17}(5n+3)\equiv0\label{p17:5n+3}
\end{align}
and, in particular,
\begin{align*}
p_{-17}(25n+8)\equiv0,
\end{align*}
which is what we wanted to prove. But note that we achieved a stronger result.

\textit{Case 3}. Similarly, for $k=11$, we have,
\begin{align*}
\sum_{n=0}^{\infty}p_{-11}(n)q^{n} &=\dfrac{1}{E_{1}^{11}}=\dfrac{E_{1}^{14}}{E_{1}^{25}}\equiv\dfrac{E_{1}^{14}}{E_{5}^{5}}\\
 &=\dfrac{E_{25}^{14}}{E_{5}^{5}}\Bigg(\dfrac{1}{R(q^{5})^{14}}-\dfrac{14q}{R(q^{5})^{13}}+\dfrac{77q^{2}}{R(q^{5})^{12}}-\dfrac{182q^{3}}{R(q^{5})^{11}}+\dfrac{910q^{5}}
 {R(q^{5})^{9}}-\dfrac{1365q^{6}}{R(q^{5})^{8}}\\
 &\quad-\dfrac{1430q^{7}}{R(q^{5})^{7}}+\dfrac{5005q^{8}}{R(q^{5})^{6}}-\dfrac{10010q^{10}}{R(q^{5})^{4}}+\dfrac{3640q^{11}}{R(q^{5})^{3}}+\dfrac{14105q^{12}}
 {R(q^{5})^{2}}-\dfrac{6930q^{13}}{R(q^{5})}\\
 &\quad-15625q^{14}+6930q^{15}R(q^{5})+14105q^{16}R(q^{5})^{2}-3640q^{17}R(q^{5})^{3}\\
 &\quad-10010q^{18}R(q^{5})^{4}+5005q^{20}R(q^{5})^{6}+1430q^{21}R(q^{5})^{7}-1365q^{22}R(q^{5})^{8}\\
 &\quad-910q^{23}R(q^{5})^{9}+182q^{25}R(q^{5})^{11}+77q^{26}R(q^{5})^{12}+14q^{27}R(q^{5})^{13}\\
 &\quad+q^{28}R(q^{5})^{14}\Bigg).
\end{align*}

It follows that
\begin{align}
p_{-11}(5n+4)\equiv0.\label{p11:5n+4}
\end{align}
In particular,
\begin{align*}
p_{-11}(25n+14)\equiv0.
\end{align*}

\textit{Case 4}. For $k=6$ and $7$, we need the following lemma.
\begin{lemma}
We have
\begin{align}
H\left(q^{i-5}\dfrac{E_{5}^{6i-1}}{E_{1}^{6i}}\right) &=\sum_{j=1}^{\infty}m(6i,i+j)q^{5j-5}\dfrac{E_{25}^{6j}}{E_{5}^{6j+1}},\label{H1}\\
H\left(q^{i-4}\dfrac{E_{5}^{6i}}{E_{1}^{6i+1}}\right) &=\sum_{j=1}^{\infty}m(6i+1,i+j)q^{5j-5}\dfrac{E_{25}^{6j-1}}{E_{5}^{6j}}.\label{H2}
\end{align}
\end{lemma}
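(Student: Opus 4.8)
The plan is to deduce both \eqref{H1} and \eqref{H2} from Lemma~\ref{key lemma} by isolating, in each argument of $H$, a factor that is already a formal Laurent series in $q^5$ and hence commutes with the huffing operator. The basic observation I would use throughout is that $H$ is linear and satisfies $H(g\cdot f)=g\cdot H(f)$ whenever $g$ is a formal Laurent series in $q^5$; this is immediate from the definition of $H$, since multiplication by a power of $q^5$ does not alter the residue of an exponent modulo $5$. In particular $q^{5}$, $E_5$, $E_{25}$, their inverses, and $T=E_5^6/(q^5E_{25}^6)$ are all Laurent series in $q^5$ and so may be pulled outside $H$.

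For \eqref{H1}, the first step is to rewrite the argument of $H$ in terms of $\zeta$ and $T$. Using $\zeta=E_1/(qE_{25})$ and $T=E_5^6/(q^5E_{25}^6)$, a short computation gives
\begin{align*}
q^{i-5}\frac{E_5^{6i-1}}{E_1^{6i}}=\frac{1}{q^{5}E_5}\cdot\frac{T^{i}}{\zeta^{6i}}.
\end{align*}
Since the factor $T^{i}/(q^5E_5)$ is a Laurent series in $q^5$, I would pull it outside $H$ and then invoke Lemma~\ref{key lemma} with the index $6i$, obtaining $H(\zeta^{-6i})=\sum_{\ell\geq1}m(6i,\ell)T^{-\ell}$. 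Multiplying back by $T^i/(q^5E_5)$ and using $T^{-j}=q^{5j}E_{25}^{6j}/E_5^{6j}$ produces a sum of exactly the required shape, after which it only remains to reindex by $\ell=i+j$.

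The computation for \eqref{H2} is entirely analogous and begins from the identity
\begin{align*}
q^{i-4}\frac{E_5^{6i}}{E_1^{6i+1}}=\frac{1}{q^{5}E_{25}}\cdot\frac{T^{i}}{\zeta^{6i+1}}.
\end{align*}
The same three moves—pulling the $q^5$-series factor $T^i/(q^5E_{25})$ outside $H$, applying Lemma~\ref{key lemma} with index $6i+1$, and reindexing—then yield \eqref{H2}.

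The one step that requires genuine justification rather than bookkeeping is the reindexing $\ell\mapsto i+j$: the lemma being proved sums $j$ from $1$, i.e. $\ell$ from $i+1$, whereas Lemma~\ref{key lemma} sums $\ell$ from $1$. I expect this to be the main (though still routine) obstacle, and I would settle it using Lemma~\ref{zero vaules}: since $m(a,b)=0$ whenever $a>5b$, every term with $\ell\leq i$ in $\sum_{\ell}m(6i,\ell)T^{-\ell}$, and likewise in $\sum_{\ell}m(6i+1,\ell)T^{-\ell}$, vanishes because $5\ell\leq5i<6i\leq 6i+1$. Hence the summation may be started at $\ell=i+1$ without changing its value, which is precisely what makes the lower limit $j=1$ on the right-hand sides of \eqref{H1} and \eqref{H2} correct.
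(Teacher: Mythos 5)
Your proof is correct, and it rests on the same key ingredient as the paper's proof, namely Lemma \ref{key lemma}. The paper's entire argument is the single sentence ``It follows immediately from Lemma \ref{key lemma} and induction on $i$,'' so in effect you have supplied the details it omits; moreover, your direct computation---writing the argument of $H$ as $\frac{T^i}{q^5E_5}\cdot\zeta^{-6i}$ (resp.\ $\frac{T^i}{q^5E_{25}}\cdot\zeta^{-(6i+1)}$), pulling the $q^5$-series factor through $H$, applying Lemma \ref{key lemma}, and converting $T^{-j}$ back to eta-quotient form---shows that no induction on $i$ is needed at all. Your treatment of the reindexing $\ell\mapsto i+j$ via the vanishing $m(a,b)=0$ for $a>5b$ is exactly the point that makes the lower summation limit $j=1$ in \eqref{H1} and \eqref{H2} legitimate, and it is the step the paper glosses over; you handle it correctly.
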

\begin{proof}
It follows immediately from Lemma \ref{key lemma} and induction on $i$.
\end{proof}

Taking $i=1$ in \eqref{H1}, according to \eqref{recu formula} and the definition of $H$, then replacing $q^{5}$ by $q$, we obtain
\begin{align}
\sum_{n=0}^{\infty}p_{-6}(5n+4)q^{n} &=315\dfrac{E_{5}^{6}}{E_{1}^{12}}+52\times5^{4}q\dfrac{E_{5}^{12}}{E_{1}^{18}}+63\times5^{6}q^{2}
\dfrac{E_{5}^{18}}{E_{1}^{24}}\notag\\
 &\quad+6\times5^{9}q^{3}\dfrac{E_{5}^{24}}{E_{1}^{30}}+5^{11}q^{4}\dfrac{E_{5}^{30}}{E_{1}^{36}}\label{gf6:5n+4}
\end{align}
and
\begin{align*}
\sum_{n=0}^{\infty}p_{-6}(5n+4)q^{n} &\equiv315\dfrac{E_{5}^{4}}{E_{1}^{2}}=315E_{5}^{4}\sum_{n=0}^{\infty}p_{-2}(n)q^{n}.
\end{align*}
By \eqref{partition pairs mod 5}, we get
\begin{align*}
p_{-6}(25n+14)\equiv p_{-6}(25n+19)\equiv p_{-6}(25n+24)\equiv0.
\end{align*}

Putting $i=1$ in \eqref{H2} and by \eqref{recu formula}, we have
\begin{align}
\sum_{n=0}^{\infty}p_{-7}(5n+3)q^{n} &=140\dfrac{E_{5}^{5}}{E_{1}^{12}}+49\times5^{4}q\dfrac{E_{5}^{11}}{E_{1}^{18}}
+21\times5^{7}q^{2}\dfrac{E_{5}^{17}}{E_{1}^{24}}\notag\\
 &\quad+91\times5^{8}q^{3}\dfrac{E_{5}^{23}}{E_{1}^{30}}+7\times5^{11}q^{4}\dfrac{E_{5}^{29}}{E_{1}^{36}}
 +5^{13}q^{5}\dfrac{E_{5}^{35}}{E_{1}^{42}}\label{gf7:5n+3}
\end{align}
and
\begin{align*}
\sum_{n=0}^{\infty}p_{-7}(5n+3)q^{n} &\equiv140\dfrac{E_{5}^{3}}{E_{1}^{2}}=140E_{5}^{3}\sum_{n=0}^{\infty}p_{-2}(n)q^{n}.
\end{align*}
Similarly, we obtain
\begin{align*}
p_{-7}(25n+13)\equiv p_{-7}(25n+18)\equiv p_{-7}(25n+23)\equiv0.
\end{align*}

\textit{Case 5}. For $r\geq1$ and $k\in\{1,2,6,7,11,17\}$, assume that $k=5s+t$ $(1\leq t\leq4)$. We consider the following two cases:
\begin{enumerate}[1)]
\item $k\in\{11, 17\}$.
\begin{align*}
\sum_{n=0}^{\infty}p_{-(25r+k)}(n)q^{n} &=\dfrac{1}{E_{1}^{25r+k}}\equiv\dfrac{1}{E_{1}^{k}E_{5}^{5r}}=\dfrac{1}{E_{5}^{5r}}\sum_{n=0}^{\infty}p_{-k}(n)q^{n}.
\end{align*}
Hence
\begin{align*}
\sum_{n=0}^{\infty}p_{-(25r+k)}(5n+5-t)q^{n}\equiv\dfrac{1}{E_{5}^{5r}}\sum_{n=0}^{\infty}p_{-k}(5n+5-t)q^{n}.
\end{align*}

The Eqs. \eqref{p17:5n+3} and \eqref{p11:5n+4} imply
\begin{align*}
p_{-k}\left(5n+5-t\right)\equiv0.
\end{align*}
It follows immediately that
\begin{align*}
p_{-(25r+k)}\left(5n+5-t\right)\equiv0,
\end{align*}
since all terms in the factor $1/E_{5}^{5r}$ are of the form $q^{5i}$.

\item $k\in\{1, 2, 6, 7\}$.
\begin{align*}
\sum_{n=0}^{\infty}p_{-(25r+k)}(n)q^{n} &=\dfrac{1}{E_{1}^{25r+5s+t}}=\zeta^{-25r-5s-t}\dfrac{1}{q^{25r+5s+t}E_{25}^{25r+5s+t}}.
\end{align*}
Picking out those terms of the form $q^{5n+5-t}$ and applying Lemma \ref{key lemma}, we find that
\begin{align*}
\sum_{n=0}^{\infty}p_{-(25r+k)}(5n+5-t)q^{n}=\sum_{5r+s+1}^{25r+5s+t}m(25r+5s+t,h)\dfrac{q^{h-5r-s-1}}{{E_{1}^{6h}}E_{5}^{25r+5s+t-6h}}.
\end{align*}
According to Lemma \ref{lemma:estima}, we know that $\pi_{5}(m(25r+5s+t,h))\geq2$ if $h\geq5r+s+2$, and $\pi_{5}(m\left(25r+5s+t,5r+s+1\right))\geq1$ if $t=1$ or $2$. Therefore
\begin{align*}
\sum_{n=0}^{\infty}p_{-(25r+k)}(5n+5-t)q^{n} &\equiv m\left(25r+5s+t,5r+s+1\right)\dfrac{E_{5}^{5r+s-t+6}}{E_{1}^{30r+6s+6}}\\
 &\equiv m\left(25r+5s+t,5r+s+1\right)\dfrac{E_{5}^{5-r-t}}{E_{1}^{s+1}}.
\end{align*}
\begin{itemize}
\item If $k\in\{1, 2\}$, then $s=0$. By \eqref{Rama mod 5}, we obtain
\begin{align*}
p_{-(25r+k)}(25n+25-t)\equiv0.
\end{align*}

\item If $k\in\{6, 7\}$, then $s=1$. Upon \eqref{partition pairs mod 5}, we get
\begin{align*}
p_{-(25r+k)}(25n+15-t)\equiv p_{-(25r+k)}(25n+20-t)\equiv p_{-(25r+k)}(25n+25-t)\equiv0.
\end{align*}
\end{itemize}

\end{enumerate}
This proves \eqref{mod 25 family}.
\end{proof}

As an immediate consequence, we obtain the following corollary.
\begin{corollary}
For all non-negative integers $r$ and $n$, we have
\begin{align*}
p_{25r+16}(25n+19) &\equiv p_{25r+16}(25n+24) \equiv 0,\\
p_{25r+21}(25n+9) \equiv p_{25r+21}(25n+14) &\equiv p_{25r+21}(25n+19) \equiv p_{25r+21}(25n+24) \equiv 0,\\
p_{25r+22}(25n+8) \equiv p_{25r+22}(25n+13) &\equiv p_{25r+22}(25n+18) \equiv p_{25r+22}(25n+23) \equiv 0.
\end{align*}
\end{corollary}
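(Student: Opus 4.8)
The plan is to reduce everything to the base case $r=0$ and then to treat the three residues $k\in\{16,21,22\}$ by the same dissection method used in Cases 2 and 3 of the proof of Theorem~\ref{thm:mod25}. For the reduction I would first use $E_1^{25}\equiv E_5^{5}\pmod{25}$ to write
\[
\sum_{n=0}^{\infty}p_{-(25r+k)}(n)q^n=\frac{1}{E_1^{25r+k}}\equiv\frac{1}{E_5^{5r}}\cdot\frac{1}{E_1^{k}}\pmod{25}.
\]
Since $1/E_5^{5r}$ is a power series in $q^5$, multiplication by it carries each residue class $q^{5n+j}$ into itself, so any coefficient of $q^{5n+j}$ that vanishes modulo $25$ in $1/E_1^{k}$ still vanishes in $1/E_1^{25r+k}$. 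This is exactly the mechanism of Case~5, and it reduces the corollary to the three base statements for $p_{-16}$, $p_{-21}$, and $p_{-22}$.

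For the base cases I would write $1/E_1^{k}=E_1^{25-k}/E_1^{25}\equiv E_1^{25-k}/E_5^{5}\pmod{25}$ and expand $E_1^{25-k}=E_{25}^{25-k}\bigl(R(q^5)^{-1}-q-q^2R(q^5)\bigr)^{25-k}$ by \eqref{Rama cont frac}. The crucial simplification is the identity $R(q^5)^{-1}\cdot R(q^5)=1$, which collapses the multinomial coefficients attached to each power of $q$. Extracting the class $q^{5n+4}$ for $k\in\{16,21\}$ (note $9,14,19,24\equiv4\pmod5$) and the class $q^{5n+3}$ for $k=22$ (note $8,13,18,23\equiv3\pmod5$), I expect every surviving coefficient to be divisible by $5$. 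For $p_{-21}$ and $p_{-22}$ a single monomial survives, so that after replacing $q^5$ by $q$,
\[
\sum_{n=0}^{\infty}p_{-21}(5n+4)q^n\equiv-5\,\frac{E_5^4}{E_1^5}\equiv-5E_5^3,\qquad
\sum_{n=0}^{\infty}p_{-22}(5n+3)q^n\equiv5\,\frac{E_5^3}{E_1^5}\equiv5E_5^2\pmod{25},
\]
using $E_1^5\equiv E_5\pmod5$ together with the fact that $5$ times a series congruent to a $q^5$-series modulo $5$ is itself a $q^5$-series modulo $25$. Both right-hand sides are pure $q^5$-series, so the coefficients of $q^{5n+1},q^{5n+2},q^{5n+3},q^{5n+4}$ all vanish, which reading off $5j+4$ and $5j+3$ gives exactly the four stated congruences for $p_{-21}$ and for $p_{-22}$.

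For $p_{-16}$ three terms survive the extraction, proportional to $R(q^5)^{-5}$, $q^5$, and $q^{10}R(q^5)^5$; here the extra ingredient is \eqref{Rama cont frac 5}, which lets me recombine them as $-90\,E_5^6/E_{25}^6$ modulo $25$, the discrepancy being $-625q^5\equiv0\pmod{25}$. Substituting $q^5\mapsto q$ then yields
\[
\sum_{n=0}^{\infty}p_{-16}(5n+4)q^n\equiv-90\,E_5^3E_1\pmod{25}.
\]
Since the pentagonal number theorem forces $E_1$ to be supported on the residues $0,1,2\pmod5$ and $E_5^3$ is a $q^5$-series, the coefficients of $q^{5n+3}$ and $q^{5n+4}$ vanish modulo $25$, giving $p_{-16}(25n+19)\equiv p_{-16}(25n+24)\equiv0$. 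Combining the three base cases with the reduction of the first paragraph proves the corollary for all $r\ge0$.

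The step I expect to be the main obstacle is the $p_{-16}$ computation. Unlike the other two residues, the extraction leaves three terms rather than a single monomial, and closing the argument requires recognizing that their combination is, modulo $25$, precisely a multiple of the quotient $E_5^6/E_{25}^6$ occurring in \eqref{Rama cont frac 5}, with the leftover a multiple of $625$. Getting the signs and multinomial coefficients right in $\bigl(R(q^5)^{-1}-q-q^2R(q^5)\bigr)^{9}$, and verifying that the residual $q^5$-coefficient really is $-625$, is the one genuinely delicate calculation; the remainder is routine bookkeeping of residue classes.
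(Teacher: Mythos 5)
Your three base cases ($r=0$) are correct, and I checked the delicate numbers: the $q^{5n+4}$ part of $E_1^4$ is $-5q^4E_{25}^4$, the $q^{5n+3}$ part of $E_1^3$ is $5q^3E_{25}^3$, and for $E_1^9$ the extraction leaves $E_{25}^9\bigl(-90R(q^5)^{-5}+365q^5+90q^{10}R(q^5)^5\bigr)$, which by \eqref{Rama cont frac 5} is $E_{25}^9\bigl(-90E_5^6/E_{25}^6-625q^5\bigr)$, exactly as you predicted. The genuine gap is the reduction from general $r$ to $r=0$. You assert that, because $1/E_5^{5r}$ is a series in $q^5$, ``any coefficient of $q^{5n+j}$ that vanishes modulo $25$ in $1/E_1^{k}$ still vanishes in $1/E_1^{25r+k}$,'' and you call this the mechanism of Case 5. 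But that mechanism (part 1) of Case 5) applies only when an \emph{entire} residue class modulo 5 vanishes, as with $p_{-11}(5n+4)\equiv0$ in \eqref{p11:5n+4}: multiplication by a $q^5$-series preserves classes modulo 5 but acts by \emph{convolution} inside each class, so it does not preserve vanishing on a proper subset of a class. Your base congruences are precisely of this partial type: $p_{-22}(25n+8)\equiv0\pmod{25}$, yet $p_{-22}(25n+3)\equiv 5\,[q^{5n}]E_5^2\not\equiv0\pmod{25}$ in general (e.g. $p_{-22}(3)=2530\equiv5\pmod{25}$). Writing $1/E_5^{5r}=\sum_jc_jq^{5j}$, the coefficient of $q^{25n+8}$ in $1/E_1^{25r+22}$ is congruent to $\sum_jc_j\,p_{-22}(25n+8-5j)$, and the terms with $j\equiv1\pmod5$ involve the uncontrolled values $p_{-22}(25n'+3)$; your argument says nothing about these, so the corollary for $r\geq1$ does not follow as written.

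The repair is short and uses only what you already derived, but it is a needed extra idea: transfer the generating-function congruences, not the coefficientwise vanishing. Extraction of a residue class commutes with multiplication by the $q^5$-series $1/E_5^{5r}$, so after replacing $q^5$ by $q$ one gets $\sum_np_{-(25r+22)}(5n+3)q^n\equiv 5E_5^3/E_1^{5r+5}\equiv 5E_5^{2-r}\pmod{25}$, where the last step uses $E_1^{5(r+1)}\equiv E_5^{r+1}\pmod5$ together with the prefactor 5 (equivalently, the key missing fact is that $c_j\equiv0\pmod5$ whenever $5\nmid j$, because $1/E_5^{5r}\equiv1/E_{25}^{r}\pmod5$). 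The same computation gives $-5E_5^{3-r}$ for $k=21$ and $-90\,E_1E_5^{3-r}$ for $k=16$, all still supported on the right residue classes, and your support arguments then finish every line of the corollary for all $r$. I note also that even when repaired this is a different route from the paper's intended one: the paper's Case 5, part 2) applies verbatim to $k=5s+t$ with $(s,t)=(3,1),(4,1),(4,2)$, where Lemma \ref{lemma:estima} gives $5\mid m(25r+5s+t,5r+s+1)$ and the surviving term $m\cdot E_5^{5-r-t}/E_1^{s+1}$ reduces modulo 5 via $1/E_1^{4}\equiv E_1/E_5$ and $1/E_1^{5}\equiv1/E_5$ to the same three right-hand sides; your direct dissection of $E_1^3$, $E_1^4$, $E_1^9$ replaces that $H$-operator machinery with explicit multinomial bookkeeping.
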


\section{Congruences for $k$-colored partitions modulo powers of 5}\label{mod5 power}
\subsection{Congruences for $p_{-2}(n)$ modulo powers of 5}
In this subsection, we will prove \eqref{5 power cong:odd} and \eqref{5 power cong:even}. To present the generating functions for the sequence in \eqref{5 power cong:odd} and \eqref{5 power cong:even}, we need to define another infinite matrix of natural numbers $\{a(j,k)\}_{j,k\geq1}$ by
\begin{enumerate}[1)]
\item $a(1,1)=10$, $a(1,2)=125$, and $a(1,k)=0$ for $k\geq3$.
\item
\begin{align*}
a(j+1,k)=
\begin{cases}
\sum_{i=1}^{\infty}a(j,i)m(6i,i+k)\quad &\textrm{if}~j~\textrm{is~odd}, \cr \sum_{i=1}^{\infty}a(j,i)m(6i+2,i+k)\quad &\textrm{if}~j~\textrm{is~even}.
\end{cases}
\end{align*}
\end{enumerate}
According to Lemma \ref{zero vaules}, the summation in $2)$ is indeed finite.

To prove \eqref{5 power cong:odd}--\eqref{5 power cong:even}, we need the following key theorems and lemmas.
\begin{theorem}\label{theorem:mod power of 5:even-odd}
For any positive integer $j$, we have
\begin{align}
\sum_{n=0}^{\infty}p_{-2}\left(5^{2j-1}n+\dfrac{7\times5^{2j-1}+1}{12}\right)q^{n} &=\sum_{l=1}^{\infty}a(2j-1,l)q^{l-1}\dfrac{E_{5}^{6l-2}}{E_{1}^{6l}},
\label{gf:mod power 5:odd}\\
\sum_{n=0}^{\infty}p_{-2}\left(5^{2j}n+\dfrac{11\times5^{2j}+1}{12}\right)q^{n} &=\sum_{l=1}^{\infty}a(2j,l)q^{l-1}\dfrac{E_{5}^{6l}}{E_{1}^{6l+2}}.
\label{gf:mod power 5:even}
\end{align}
\end{theorem}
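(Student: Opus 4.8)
The plan is to prove both identities simultaneously by induction on $j$, alternating between the two formulas and using the huffing operator $H$ to pass from one power of $5$ to the next. The base case is $j=1$ of \eqref{gf:mod power 5:odd}: since $\frac{7\times5+1}{12}=3$ and $a(1,1)=10$, $a(1,2)=125$, $a(1,k)=0$ for $k\geq3$, the right-hand side is exactly $10\frac{E_5^4}{E_1^6}+125q\frac{E_5^{10}}{E_1^{12}}$, which is precisely \eqref{gf2:5n+3}. The induction then proceeds in two alternating steps: from the odd identity \eqref{gf:mod power 5:odd} at level $j$ I would derive the even identity \eqref{gf:mod power 5:even} at level $j$, and from the even identity at level $j$ I would derive the odd identity at level $j+1$.

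First I would pin down which arithmetic subprogression links consecutive levels. Writing $n=5m+r$ in the argument on the left of \eqref{gf:mod power 5:odd} and matching against the argument on the left of \eqref{gf:mod power 5:even} forces $5^{2j-1}r=4\cdot5^{2j-1}$, i.e. $r=4$; similarly, passing from the even identity at level $j$ to the odd identity at level $j+1$ forces $r=2$. Thus the whole argument reduces to extracting the terms $n\equiv4\pmod5$ (respectively $n\equiv2\pmod5$) from the right-hand sides and re-expressing the result in the same shape.

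The heart of the proof is a pair of huffing identities analogous to \eqref{H1}--\eqref{H2}. Using $E_1=qE_{25}\zeta$ I would rewrite the generic odd-type summand as $q^{l-1}\frac{E_5^{6l-2}}{E_1^{6l}}=q^{-5l-1}\frac{E_5^{6l-2}}{E_{25}^{6l}}\zeta^{-6l}$. Since the prefactor $q^{-5l-1}\frac{E_5^{6l-2}}{E_{25}^{6l}}$ carries $q$-exponents $\equiv4\pmod5$, the terms $n\equiv4\pmod5$ come exactly from the $q^{5n}$-part of $\zeta^{-6l}$, namely $H(\zeta^{-6l})=\sum_k m(6l,k)T^{-k}$ by Lemma \ref{key lemma}. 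Substituting $T^{-k}=q^{5k}E_{25}^{6k}/E_5^{6k}$, writing $k=l+l'$, and finally replacing $q^5$ by $q$ (so that $E_5\mapsto E_1$ and $E_{25}\mapsto E_5$) converts each odd-type summand into $\sum_{l'}m(6l,l+l')q^{l'-1}\frac{E_5^{6l'}}{E_1^{6l'+2}}$, an even-type sum. Summing against the weights $a(2j-1,l)$ and invoking the recursion $a(2j,l')=\sum_l a(2j-1,l)m(6l,l+l')$ yields \eqref{gf:mod power 5:even}. The even-to-odd step is identical in spirit: the summand $q^{l-1}\frac{E_5^{6l}}{E_1^{6l+2}}$ becomes $q^{-5l-3}\frac{E_5^{6l}}{E_{25}^{6l+2}}\zeta^{-(6l+2)}$, whose prefactor has $q$-exponents $\equiv2\pmod5$, so extracting $n\equiv2$ amounts to replacing $\zeta^{-(6l+2)}$ by $H(\zeta^{-(6l+2)})=\sum_k m(6l+2,k)T^{-k}$, producing odd-type summands with weight $m(6l+2,l+l')$ and hence, via $a(2j+1,l')=\sum_l a(2j,l)m(6l+2,l+l')$, the odd identity at level $j+1$.

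I expect the main obstacle to be purely bookkeeping: tracking the $q$-shifts through the substitution $q^5\mapsto q$ and confirming that the singular-looking $l'=0$ contributions (which would carry a spurious $q^{-1}$) vanish. The latter is exactly guaranteed by the vanishing property $m(i,j)=0$ for $i>5j$ established earlier, since $m(6l,l)=0$ and $m(6l+2,l)=0$ for all $l\geq1$; the same property, together with $m(i,j)=0$ for $j>i$, also makes every sum over $l'$ finite and legitimizes the interchange of summation. Once these two huffing identities are in place, the induction closes mechanically, and the claimed identities follow.
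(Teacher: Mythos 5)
Your proposal is correct and follows essentially the same route as the paper's own proof: induction alternating between the two identities, rewriting each generating function in terms of $\zeta$ and $T$, extracting the residue classes $n\equiv4,2\pmod5$ via $H$ and Lemma \ref{key lemma}, killing the $k\le l$ terms with the vanishing property $m(i,j)=0$ for $i>5j$, and closing the induction with the defining recursion for $a(j,k)$. The computations you do carry out (the base case, the residues $r=4$ and $r=2$, the prefactor rewritings, and the reindexing $k=l+l'$) all check out against the paper.
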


\begin{proof}
We proceed by induction on $j$. By \eqref{gf2:5n+3}, we know that \eqref{gf:mod power 5:odd} holds for $j=1$. Assume that \eqref{gf:mod power 5:odd} is true for some positive integer $j\geq1$. We restate it as
\begin{align*}
\sum_{n=0}^{\infty}p_{-2}\left(5^{2j-1}n+\dfrac{7\times5^{2j-1}+1}{12}\right)q^{n}=\dfrac{1}{qE_{5}^{2}}\sum_{l=1}^{\infty}a(2j-1,l)T^{l}\zeta^{-6l}.
\end{align*}
Picking out those terms of the form $q^{5n+4}$ and applying Lemma \ref{key lemma}, we obtain
\begin{align}
 &\sum_{n=0}^{\infty}p_{-2}\left(5^{2j-1}(5n+4)+\dfrac{7\times5^{2j-1}+1}{12}\right)q^{5n+4}\notag\\
 =&\dfrac{1}{qE_{5}^{2}}\sum_{l=1}^{\infty}a(2j-1,l)T^{l}\left(\sum_{k=1}^{\infty}m(6l,k)T^{-k}\right).\label{key equation}
\end{align}
According to Lemma \ref{zero vaules}, we know that $m(6l,k)\neq0$ implies $k\geq l+1$. Now \eqref{key equation} implies
\begin{align*}
 &\sum_{n=0}^{\infty}p_{-2}\left(5^{2j}n+\dfrac{11\times5^{2j}+1}{12}\right)q^{n}\\
 =&\dfrac{1}{qE_{1}^{2}}\sum_{l=1}^{\infty}\sum_{k=l+1}^{\infty}a(2j-1,l)m(6l,k)\left(q\dfrac{E_{5}^{6}}{E_{1}^{6}}\right)^{k-l}
 \quad(\textrm{replace}~k~\textrm{by}~k+l)\\
 =&\dfrac{1}{qE_{1}^{2}}\sum_{k=1}^{\infty}\sum_{l=1}^{\infty}a(2j-1,l)m(6l,k+l)\left(q\dfrac{E_{5}^{6}}{E_{1}^{6}}\right)^{k}\\
 =&\sum_{k=1}^{\infty}a(2j,k)q^{k-1}\dfrac{E_{5}^{6k}}{E_{1}^{6k+2}}.
\end{align*}
This implies that \eqref{gf:mod power 5:even} holds for $j$. Similarly, we rewrite \eqref{gf:mod power 5:even} as
\begin{align*}
\sum_{n=0}^{\infty}p_{-2}\left(5^{2j}n+\dfrac{11\times5^{2j}+1}{12}\right)q^{n}=\dfrac{1}{q^{3}E_{25}^{2}}\sum_{l=1}^{\infty}a(2j,l)T^{l}\zeta^{-(6l+2)}.
\end{align*}
Taking out those terms of the form $q^{5n+2}$ and applying Lemma \ref{key lemma}, we find
\begin{align}
 &\sum_{n=0}^{\infty}p_{-2}\left(5^{2j}(5n+2)+\dfrac{11\times5^{2j}+1}{12}\right)q^{5n+2}\notag\\
 =&\dfrac{1}{q^{3}E_{25}^{2}}\sum_{l=1}^{\infty}a(2j,l)T^{l}\left(\sum_{k=1}^{\infty}m(6l+2,k)T^{-k}\right).\label{key equation2}
\end{align}
By Lemma \ref{zero vaules}, we know that $m(6l+2,k)\neq0$ implies $k\geq l+1$. Now \eqref{key equation2} implies
\begin{align*}
 &\sum_{n=0}^{\infty}p_{-2}\left(5^{2j+1}n+\dfrac{7\times5^{2j+1}+1}{12}\right)q^{n}\\
 =&\dfrac{1}{qE_{5}^{2}}\sum_{l=1}^{\infty}\sum_{k=l+1}^{\infty}a(2j,l)m(6l+2,k)\left(q\dfrac{E_{5}^{6}}{E_{1}^{6}}\right)^{k-l}
 \quad(\textrm{replace}~k~\textrm{by}~k+l)\\
 =&\dfrac{1}{qE_{5}^{2}}\sum_{k=1}^{\infty}\sum_{l=1}^{\infty}a(2j,l)m(6l+2,k+l)\left(q\dfrac{E_{5}^{6}}{E_{1}^{6}}\right)^{k}\\
 =&\sum_{k=1}^{\infty}a(2j+1,k)q^{k-1}\dfrac{E_{5}^{6k-2}}{E_{1}^{6k}}.
\end{align*}
This implies that \eqref{gf:mod power 5:odd} holds for $j+1$. This finishes the proof by induction.
\end{proof}

For any positive integer $n$, let $\pi_{5}(n)$ enumerate the highest power of 5 that divides $n$. For convention, we define $\pi_{5}(0)=+\infty$. To prove \eqref{5 power cong 1}--\eqref{5 power cong 3}, we need the following lemma to estimate $\pi_{5}\left(a(j,k)\right)$.
\begin{lemma}[Lemma 4.1, \cite{HH1981}]\label{lemma:estima}
For any positive integers $j\geq1$, we have
\begin{align*}
\pi_{5}(m(i,j))\geq\left\lfloor\dfrac{5j-i-1}{2}\right\rfloor.
\end{align*}
\end{lemma}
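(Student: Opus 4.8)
The plan is to prove the bound by induction on the column index $j$, exploiting the recurrence \eqref{recu formula} together with the $5$-adic valuations of its coefficients. Throughout write $\nu=\pi_{5}$, and recall the ultrametric inequality $\nu(x+y)\geq\min\{\nu(x),\nu(y)\}$, with the convention $\nu(0)=+\infty$ and $m(i,j)=0$ whenever $i\leq0$. The key observation is that the coefficients $25,25,15,5,1$ appearing in \eqref{recu formula} have valuations $2,2,1,1,0$, and these are tuned precisely against the successive index shifts so as to reproduce the target estimate.

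For the base case $j=1$ I would simply check the claim against the explicitly given first column $m(1,1)=5$, $m(2,1)=10$, $m(3,1)=9$, $m(4,1)=4$, $m(5,1)=1$, and $m(i,1)=0$ for $i\geq6$: here the target $\lfloor(4-i)/2\rfloor$ equals $1,1,0,0,-1$ for $i=1,\dots,5$, each of which is met by the valuations $1,1,0,0,0$, and the bound is vacuous once $m(i,1)=0$.

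For the inductive step, assume the estimate holds for column $j-1$, that is, $\nu(m(i',j-1))\geq\lfloor(5j-i'-6)/2\rfloor$ for all $i'$. Applying \eqref{recu formula} and using $\nu(25)=2$, $\nu(15)=\nu(5)=1$, $\nu(1)=0$, I would bound the valuation of each of the five summands from below. Writing $M=5j-i$, the five contributions are at least $2+\lfloor(M-5)/2\rfloor$, $2+\lfloor(M-4)/2\rfloor$, $1+\lfloor(M-3)/2\rfloor$, $1+\lfloor(M-2)/2\rfloor$, and $\lfloor(M-1)/2\rfloor$, respectively. The crux is then a short floor computation showing that each of these is $\geq\lfloor(M-1)/2\rfloor=\lfloor(5j-i-1)/2\rfloor$; the ultrametric inequality then yields the claim for column $j$. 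When some shifted index $i-\ell$ is $\leq0$ the corresponding term vanishes and may be ignored, and when $i>5j$ one has $m(i,j)=0$ by Lemma \ref{zero vaules}, so only the range $i\leq5j$, where the floor can still be negative and the bound is trivial anyway, needs attention.

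The main obstacle --- really the only nontrivial point --- is verifying that every summand clears the bar $\lfloor(M-1)/2\rfloor$. The binding term is the last one, $m(i-5,j-1)$, which carries coefficient $1$ and contributes exactly $\lfloor(M-1)/2\rfloor$; the remaining four must be shown to be at least as large, and this requires splitting according to the parity of $M=5j-i$. For instance, $2+\lfloor(M-4)/2\rfloor$ equals $\lfloor(M-1)/2\rfloor$ when $M$ is odd but exceeds it by $1$ when $M$ is even, while $1+\lfloor(M-3)/2\rfloor$ and $2+\lfloor(M-5)/2\rfloor$ coincide with $\lfloor(M-1)/2\rfloor$ in both parities. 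Since these reduce to elementary identities for the floor function, no genuine difficulty arises beyond this bookkeeping, and the induction closes.
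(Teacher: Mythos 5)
Your proof is correct. Note that the paper itself offers no proof of this lemma --- it is imported verbatim as Lemma 4.1 of Hirschhorn--Hunt \cite{HH1981} --- and your induction on the column index via the recurrence \eqref{recu formula}, with the valuation bookkeeping $\nu(25)=2$, $\nu(15)=\nu(5)=1$, $\nu(1)=0$ balanced against the index shifts, is precisely the argument of that cited source; your base-case check against the first column and the parity analysis showing each of the five summands clears $\left\lfloor(5j-i-1)/2\right\rfloor$ are both accurate, so the proposal supplies a complete, self-contained proof of exactly the kind the paper delegates to the reference.
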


\begin{lemma}\label{estima:lemma1}
For any positive integers $j\geq1$ and $k\geq1$, we have
\begin{align}
\pi_{5}(a(2j-1,k)) &\geq j+\left\lfloor\dfrac{5k-5}{2}\right\rfloor,\label{estima:odd}\\
\pi_{5}(a(2j,k)) &\geq j+\left\lfloor\dfrac{5k-3}{2}\right\rfloor.\label{estima:even}
\end{align}
\end{lemma}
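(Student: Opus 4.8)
The plan is to prove both bounds simultaneously by induction on $j$, exploiting the fact that the two defining recursions for $a(j+1,k)$ link the odd-indexed rows to the even-indexed rows and vice versa. Concretely, I would verify \eqref{estima:odd} for $j=1$ directly from the initial data $a(1,1)=10$, $a(1,2)=5^{3}$ and $a(1,k)=0$ for $k\geq3$: since $\pi_{5}(10)=1=1+\lfloor 0/2\rfloor$ and $\pi_{5}(5^{3})=3=1+\lfloor 5/2\rfloor$, the bound $\pi_{5}(a(1,k))\geq 1+\lfloor(5k-5)/2\rfloor$ holds (trivially for $k\geq3$). The inductive engine is then the ultrametric inequality $\pi_{5}\big(\sum_{i}c_{i}\big)\geq\min_{i}\pi_{5}(c_{i})$: to bound $\pi_{5}(a(j+1,k))$ it suffices to bound $\pi_{5}\big(a(j,i)\,m(6i,i+k)\big)$ (respectively with $m(6i+2,i+k)$) uniformly in $i$, using the induction hypothesis on $a(j,i)$ together with Lemma \ref{lemma:estima}. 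By Lemma \ref{zero vaules} each sum is finite, so no convergence issues arise.

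For the step from odd to even, assume \eqref{estima:odd} for $j$ and write $a(2j,k)=\sum_{i}a(2j-1,i)\,m(6i,i+k)$. Lemma \ref{lemma:estima} gives $\pi_{5}(m(6i,i+k))\geq\lfloor(5k-i-1)/2\rfloor$, so each term has valuation at least $j+\lfloor(5i-5)/2\rfloor+\lfloor(5k-i-1)/2\rfloor$, and \eqref{estima:even} reduces to the elementary inequality $\lfloor(5i-5)/2\rfloor+\lfloor(5k-i-1)/2\rfloor\geq\lfloor(5k-3)/2\rfloor$ for all $i\geq1$. For the step from even to odd, assume \eqref{estima:even} for $j$ and write $a(2j+1,k)=\sum_{i}a(2j,i)\,m(6i+2,i+k)$; now Lemma \ref{lemma:estima} gives $\pi_{5}(m(6i+2,i+k))\geq\lfloor(5k-i-3)/2\rfloor$, and \eqref{estima:odd} with $j$ replaced by $j+1$ reduces to $\lfloor(5i-3)/2\rfloor+\lfloor(5k-i-3)/2\rfloor\geq 1+\lfloor(5k-5)/2\rfloor$. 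In both cases I would prove the floor inequality by showing the left-hand side is nondecreasing in $i$ — increasing $i$ by one raises the first floor by $2$ or $3$ and lowers the second by at most $1$, a net gain of at least $1$ — and then checking the boundary value $i=1$, which is the only case that can be tight.

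The delicate point, and the one I expect to be the main obstacle, is the extra $+1$ demanded on the right-hand side in the even-to-odd passage: whereas the odd-to-even inequality is essentially an equality of ``total degrees'', the even-to-odd inequality must manufacture one additional power of $5$. This is exactly where the $i=1$ boundary case $\lfloor 1\rfloor+\lfloor(5k-4)/2\rfloor\geq 1+\lfloor(5k-5)/2\rfloor$ is sharp (tight for odd $k$), and it is this gain, compounded once per two steps, that produces the linear-in-$j$ growth $j+\lfloor\cdots\rfloor$ in the final bounds. Everything else is routine floor arithmetic, and once the two inequalities are established the induction closes immediately, yielding \eqref{estima:odd} and \eqref{estima:even} for all $j,k\geq1$.
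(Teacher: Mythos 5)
Your proposal is correct and follows essentially the same route as the paper: simultaneous induction on $j$ alternating between the two recursions, the ultrametric bound $\pi_{5}(\sum_i c_i)\geq\min_i\pi_{5}(c_i)$ combined with Lemma \ref{lemma:estima}, and the observation that each floor sum is nondecreasing in $i$ (gain of at least $2$ versus loss of at most $1$), so only the $i=1$ boundary value needs checking. Your explicit verification of the base case and of the tight $i=1$ case $1+\left\lfloor(5k-4)/2\right\rfloor\geq 1+\left\lfloor(5k-5)/2\right\rfloor$ matches the paper's computation, where this same boundary evaluation justifies the final inequality in the even-to-odd step.
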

\begin{proof}
It is easy to see that \eqref{estima:odd} holds for $j=1$. Assume \eqref{estima:odd} is true for $j\geq1$. By definition of $\pi_{5}$ and Lemma \ref{lemma:estima}, we get
\begin{align}
\pi_{5}(a(2j,k)) &=\pi_{5}\left(\sum_{i=1}^{\infty}a(2j-1,i)m(6i,k+i)\right)\notag\\
 &\geq\min_{i\geq1}\left(\pi_{5}(a(2j-1,i))+\pi_{5}(m(6i,k+i))\right)\notag\\
 &\geq\min_{i\geq1}\left(j+\left\lfloor\dfrac{5i-5}{2}\right\rfloor+\left\lfloor\dfrac{5k-i-1}{2}\right\rfloor\right).\label{inequlity}
\end{align}
Let
\begin{align*}
g(i,k)=\left\lfloor\dfrac{5i-5}{2}\right\rfloor+\left\lfloor\dfrac{5k-i-1}{2}\right\rfloor.
\end{align*}

Notice that for fixed $k$, if we increase $i$ by 1, $\left\lfloor\dfrac{5i-5}{2}\right\rfloor$ increases by at least 2, but $\left\lfloor\dfrac{5k-i-1}{2}\right\rfloor$ decreases by at most 1. Hence $g(i+1,k)\geq g(i,k)+1$. Thus we obtain
\begin{align*}
g(i,k)\geq g(1,k)=\left\lfloor\dfrac{5k-2}{2}\right\rfloor\geq\left\lfloor\dfrac{5k-3}{2}\right\rfloor.
\end{align*}
Thus, we derive form \eqref{inequlity} that
\begin{align*}
\pi_{5}(a(2j,k))\geq j+\left\lfloor\dfrac{5k-3}{2}\right\rfloor.
\end{align*}
This proves that \eqref{estima:even} holds for $j$.

Similarly, we find
\begin{align}
\pi_{5}(a(2j+1,k)) &=\pi_{5}\left(\sum_{i=1}^{\infty}a(2j,i)m(6i+2,k+i)\right)\notag\\
 &\geq \min_{i\geq1}\left(j+\left\lfloor\dfrac{5i-3}{2}\right\rfloor+\left\lfloor\dfrac{5k-i-3}{2}\right\rfloor\right)\notag\\
 &\geq j+1+\left\lfloor\dfrac{5k-5}{2}\right\rfloor.\label{inequlity2}
\end{align}
Here the last equality in \eqref{inequlity2} because the minimal value occurs at $i=1$. This shows that \eqref{estima:odd} holds for $j+1$. The proof is completed by induction.
\end{proof}

The congruence \eqref{5 power cong:odd} follows from \eqref{gf:mod power 5:odd} together with \eqref{estima:odd}, and the congruence \eqref{5 power cong:even} follows from \eqref{gf:mod power 5:even} together with \eqref{estima:even}.

\subsection{Congruences for $p_{-6}(n)$ modulo powers of 5}
Now, we apply the same method to investigate the arithmetic properties of $p_{-6}(n)$. Define
\begin{enumerate}[1)]
\item $b(1,1)=315$, $a(1,2)=52\times5^{4}$, $b(1,3)=63\times5^{6}$, $b(1,4)=6\times5^{9}$, $b(1,5)=5^{11}$ and $b(1,k)=0$ for $k\geq6$.
\item
\begin{align*}
b(j+1,k)=\sum_{i=1}^{\infty}b(j,i)m(6i+6,k+i+1),\quad j\geq1, \quad k\geq1.
\end{align*}
\end{enumerate}

\begin{theorem}\label{theorem:genral gf1}
For any positive integer $j$, we have
\begin{align}
\sum_{n=0}^{\infty}p_{-6}\left(5^{j}n+\dfrac{3\times5^{j}+1}{4}\right)q^{n}=\sum_{l=1}^{\infty}b(j,l)q^{l-1}\dfrac{E_{5}^{6l}}{E_{1}^{6l+6}}.\label{gf:general1}
\end{align}
\end{theorem}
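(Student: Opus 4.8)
The plan is to prove \eqref{gf:general1} by induction on $j$, mirroring the argument used for Theorem \ref{theorem:mod power of 5:even-odd} but with a single recursion at each step rather than an alternating even/odd pair. The base case $j=1$ is nothing but \eqref{gf6:5n+4}: indeed $\frac{3\times5+1}{4}=4$, and reading off the coefficients $b(1,1)=315$, $b(1,2)=52\times5^{4}$, $b(1,3)=63\times5^{6}$, $b(1,4)=6\times5^{9}$, $b(1,5)=5^{11}$ against the successive powers $\frac{E_{5}^{6l}}{E_{1}^{6l+6}}$ shows that \eqref{gf:general1} holds for $j=1$.

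For the inductive step, assume \eqref{gf:general1} for some $j\geq1$. First I would rewrite the right-hand side in terms of $\zeta$ and $T$. Using $E_{1}=q\zeta E_{25}$ and $E_{5}^{6}=q^{5}TE_{25}^{6}$ one finds $q^{l-1}\frac{E_{5}^{6l}}{E_{1}^{6l+6}}=\frac{1}{q^{7}E_{25}^{6}}T^{l}\zeta^{-(6l+6)}$, so that the induction hypothesis reads
\[
\sum_{n=0}^{\infty}p_{-6}\Big(5^{j}n+\tfrac{3\times5^{j}+1}{4}\Big)q^{n}=\frac{1}{q^{7}E_{25}^{6}}\sum_{l=1}^{\infty}b(j,l)\,T^{l}\zeta^{-(6l+6)}.
\]
The key arithmetic observation is that $5^{j}(5n+3)+\frac{3\times5^{j}+1}{4}=5^{j+1}n+\frac{3\times5^{j+1}+1}{4}$, so the next level of \eqref{gf:general1} is obtained by extracting the terms of the form $q^{5n+3}$. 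Since the prefactor $\frac{1}{q^{7}E_{25}^{6}}T^{l}$ equals $q^{-7-5l}$ times a power series in $q^{5}$ and $-7\equiv3\pmod5$, extracting $q^{5n+3}$ amounts precisely to applying the huffing operator $H$ to $\zeta^{-(6l+6)}$. Lemma \ref{key lemma} then gives $H\big(\zeta^{-(6l+6)}\big)=\sum_{k}m(6l+6,k)T^{-k}$.

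After this substitution I would reindex the resulting double sum. By Lemma \ref{zero vaules}, $m(6l+6,k)\neq0$ forces $k\geq l+2$, so writing $k=k'+l+1$ with $k'\geq1$ and collecting the coefficient of $T^{-k'-1}$ produces exactly $b(j+1,k')=\sum_{l}b(j,l)m(6l+6,k'+l+1)$, matching the defining recursion for $b$. Converting $\frac{T^{-k'-1}}{q^{7}E_{25}^{6}}$ back in terms of $E_{5}$ and $E_{25}$ gives $q^{5k'-2}\frac{E_{25}^{6k'}}{E_{5}^{6k'+6}}$; replacing $q^{5}$ by $q$ (under which $E_{25}\mapsto E_{5}$ and $E_{5}\mapsto E_{1}$) and dividing out the common factor $q^{3}$ recovers $\sum_{k'}b(j+1,k')q^{k'-1}\frac{E_{5}^{6k'}}{E_{1}^{6k'+6}}$, which is \eqref{gf:general1} for $j+1$. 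This closes the induction.

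The routine part is the bookkeeping with $\zeta$, $T$ and the $q^{5}\mapsto q$ substitution; the step requiring genuine care is verifying that picking out the residue class $q^{5n+3}$ coincides with applying $H$ to the pure $\zeta$-power, so that Lemma \ref{key lemma} is legitimately applicable, and that the vanishing range supplied by Lemma \ref{zero vaules} lines the shift $k\mapsto k'+l+1$ up exactly with the definition of $b(j+1,k)$. Once these index alignments are pinned down, the computation proceeds formally just as in the proof of Theorem \ref{theorem:mod power of 5:even-odd}.
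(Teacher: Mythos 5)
Your proposal is correct and follows essentially the same route as the paper's own proof: induction on $j$ with base case \eqref{gf6:5n+4}, rewriting the series as $\frac{1}{q^{7}E_{25}^{6}}\sum_{l}b(j,l)T^{l}\zeta^{-(6l+6)}$, extracting the $q^{5n+3}$ terms via the huffing operator and Lemma \ref{key lemma}, using Lemma \ref{zero vaules} to justify the shift $k\mapsto k+l+1$, and matching the result to the recursion defining $b(j+1,k)$. In fact you spell out two points the paper leaves implicit (why extracting $q^{5n+3}$ amounts to applying $H$ to the pure $\zeta$-power, and the explicit conversion of $T^{-k'-1}/(q^{7}E_{25}^{6})$ back to eta-quotient form), so no changes are needed.
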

\begin{proof}
We proceed by induction on $j$. According to \eqref{gf6:5n+4}, we know that \eqref{gf:general1} is true for $j=1$. Assume that \eqref{gf:general1} holds for some natural number $j\geq1$. We rewrite it as
\begin{align*}
\sum_{n=0}^{\infty}p_{-6}\left(5^{j}n+\dfrac{3\times5^{j}+1}{4}\right)q^{n}=\dfrac{1}{q^{7}E_{25}^{6}}\sum_{l=1}^{\infty}b(j,l)T^{l}\zeta^{-(6l+6)}.
\end{align*}
Picking out those terms of the form $q^{5n+3}$ and applying Lemma \ref{key lemma}, we find that
\begin{align*}
\sum_{n=0}^{\infty}p_{-6}\left(5^{j}(5n+3)+\dfrac{3\times5^{j}+1}{4}\right)q^{5n+3} &=\dfrac{1}{q^{7}E_{25}^{6}}\sum_{l=1}^{\infty}b(j,l)T^{l}H\left(\zeta^{-(6l+6)}\right)\\
&=\dfrac{1}{q^{7}E_{25}^{6}}\sum_{l=1}^{\infty}\sum_{k=1}^{\infty}b(j,l)m(6l+6,k)T^{l-k}.
\end{align*}
By Lemma \ref{zero vaules}, we know that $m(6l+6,k)\neq0$ implies $k\geq l+2$. Dividing both sides by $q^{3}$ and replacing $q^{5}$ by $q$, we get
\begin{align*}
 &\sum_{n=0}^{\infty}p_{-6}\left(5^{j+1}n+\dfrac{3\times5^{j+1}+1}{4}\right)q^{n}\\
 =&\dfrac{1}{q^{2}E_{5}^{6}}\sum_{l=1}^{\infty}\sum_{k=l+2}^{\infty}b(j,l)m(6l+6,k)\left(q\dfrac{E_{5}^{6}}{E_{1}^{6}}\right)^{k-l}
 \quad(\textrm{replace}~k~\textrm{by}~k+l+1)\\
 =&\dfrac{1}{q^{2}E_{5}^{6}}\sum_{l=1}^{\infty}\sum_{k=1}^{\infty}b(j,l)m(6l+6,k+l+1)\left(q\dfrac{E_{5}^{6}}{E_{1}^{6}}\right)^{k+1}\\
 =&\sum_{k=1}^{\infty}b(j+1,k)q^{k-1}\dfrac{E_{5}^{6l}}{E_{1}^{6l+6}}.
\end{align*}
This implies that \eqref{gf:general1} holds for $j+1$. Thus we complete the proof by induction.
\end{proof}

\begin{lemma}
For any positive integers $j\geq1$ and $k\geq1$, we have
\begin{align}
\pi_{5}\left(b(j,k)\right)\geq j+\left\lfloor\dfrac{5k-5}{2}\right\rfloor.\label{estima1}
\end{align}
\end{lemma}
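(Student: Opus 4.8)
$\pi_5(b(j,k)) \geq j + \lfloor (5k-5)/2 \rfloor$.

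Let me understand the setup.The plan is to prove the estimate $\pi_{5}(b(j,k))\geq j+\lfloor(5k-5)/2\rfloor$ by induction on $j$, following the same template that worked for Lemma \ref{estima:lemma1}. The recurrence governing the $b$-array is uniform in $j$ (unlike the $a$-array, which alternates), so I expect a single clean inductive step rather than the two interlocking steps needed before.

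For the base case $j=1$, I would simply read off the $5$-adic valuations of the explicitly given initial values $b(1,1)=315,\ b(1,2)=52\times5^{4},\ b(1,3)=63\times5^{6},\ b(1,4)=6\times5^{9},\ b(1,5)=5^{11}$, and check that in each case $\pi_{5}(b(1,k))\geq 1+\lfloor(5k-5)/2\rfloor$. Since $315=5\times 63$ has valuation exactly $1$ and $\lfloor 0/2\rfloor=0$, the case $k=1$ is tight; the remaining cases $k=2,\dots,5$ should hold comfortably, and $b(1,k)=0$ for $k\geq 6$ is vacuous (valuation $+\infty$).

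For the inductive step, assume \eqref{estima1} holds for $j$. Using the defining recurrence $b(j+1,k)=\sum_{i\geq1}b(j,i)m(6i+6,k+i+1)$ together with the ultrametric inequality for $\pi_{5}$ and Lemma \ref{lemma:estima}, I would bound
\begin{align*}
\pi_{5}(b(j+1,k))\geq\min_{i\geq1}\left(j+\left\lfloor\dfrac{5i-5}{2}\right\rfloor+\left\lfloor\dfrac{5(k+i+1)-(6i+6)-1}{2}\right\rfloor\right).
\end{align*}
The inner exponent from Lemma \ref{lemma:estima} simplifies to $\lfloor(5k-i-2)/2\rfloor$, so I must minimize $h(i,k):=\lfloor(5i-5)/2\rfloor+\lfloor(5k-i-2)/2\rfloor$ over $i\geq1$. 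As in the proof of Lemma \ref{estima:lemma1}, increasing $i$ by $1$ raises the first floor by at least $2$ while lowering the second by at most $1$, so $h(i+1,k)\geq h(i,k)+1$ and the minimum sits at $i=1$, giving $h(1,k)=\lfloor(5k-3)/2\rfloor$. This yields $\pi_{5}(b(j+1,k))\geq j+\lfloor(5k-3)/2\rfloor$, which is at least $(j+1)+\lfloor(5k-5)/2\rfloor$ precisely because $\lfloor(5k-3)/2\rfloor=\lfloor(5k-5)/2\rfloor+1$ for every $k\geq1$. That closes the induction.

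The only genuine subtlety I anticipate is verifying the gain-of-one monotonicity argument for $h(i,k)$ at the boundary between even and odd values of $5k$ (where the two floor functions round in opposite directions), and confirming that the arithmetic identity $\lfloor(5k-3)/2\rfloor=\lfloor(5k-5)/2\rfloor+1$ really holds uniformly; both are elementary floor-function checks split into the parities of $k$. Everything else is a direct transcription of the earlier argument, so I do not expect any real obstacle beyond careful bookkeeping of the shifted index $k+i+1$ in the $m$-entries.
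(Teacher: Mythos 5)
Your proposal is correct and follows essentially the same route as the paper's own proof: induction on $j$ with the base case read off from the explicit values $b(1,k)$, and the inductive step combining the recurrence $b(j+1,k)=\sum_{i\geq1}b(j,i)\,m(6i+6,k+i+1)$ with the ultrametric inequality and Lemma \ref{lemma:estima}, then minimizing $\left\lfloor\frac{5i-5}{2}\right\rfloor+\left\lfloor\frac{5k-i-2}{2}\right\rfloor$ at $i=1$ to gain the extra factor of $5$. All of your arithmetic checks (the simplification to $\lfloor(5k-i-2)/2\rfloor$, the monotonicity in $i$, and $\lfloor(5k-3)/2\rfloor=\lfloor(5k-5)/2\rfloor+1$) match the paper's argument.
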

\begin{proof}
It is obvious that \eqref{estima1} holds for $j=1$. Assume \eqref{estima1} holds for some $j\geq1$, then by Lemma \ref{lemma:estima} we get
\begin{align*}
\pi\left(b(j+1,k)\right) &=\pi\left(\sum_{i=1}^{\infty}b(j,i)m(6i+6,k+i+1)\right)\\
 &\geq\min_{i\geq1}\bigg(\pi\big(b(j,i)\big)+\pi\big(m(6i+6,k+i+1)\big)\bigg)\\
 &\geq j+\left\lfloor\dfrac{5i-5}{2}\right\rfloor+\left\lfloor\dfrac{5k-i-2}{2}\right\rfloor\\
 &\geq j+1+\left\lfloor\dfrac{5k-5}{2}\right\rfloor.
\end{align*}
Hence, \eqref{estima1} holds for $j+1$ and therefore for all integers $j\geq1$ by induction.
\end{proof}

It follows easily from \eqref{estima1} that
\begin{align*}
\pi_{5}\left(b(j,k)\right)\geq j+\left\lfloor\dfrac{5k-5}{2}\right\rfloor\geq j+2
\end{align*}
for $k\geq2$.

By \eqref{gf:general1}, we get, modulo $5^{j+1}$,
\begin{align}
\sum_{n=0}^{\infty}p_{-6}\left(5^{j}n+\dfrac{3\times5^{j}+1}{4}\right)q^{n} &\equiv b(j,1)\dfrac{E_{5}^{6}}{E_{1}^{12}}\equiv b(j,1)E_{5}^{4}\sum_{n=0}^{\infty}p_{-2}(n)q^{n}.\label{modulo congruence}
\end{align}

Since $\pi_{5}(b(j,1))\geq j$, the congruences \eqref{5 power cong 2} and \eqref{5 power cong 3} follow from \eqref{modulo congruence} together with \eqref{partition pairs mod 5}.

\subsection{Congruences for $p_{-7}(n)$ modulo powers of 5}
This case is similar to the case $k=2$, we present here the main results and omit their proofs. Let
\begin{enumerate}[1)]
\item $c(1,1)=140$, $a(1,2)=49\times5^{4}$, $c(1,3)=21\times5^{7}$, $c(1,4)=91\times5^{8}$, $c(1,5)=7\times5^{11}$, $c(1,6)=5^{13}$ and $c(1,k)=0$ for $k\geq7$.
\item
\begin{align*}
c(j+1,k)=
\begin{cases}
\sum_{i=1}^{\infty}c(j,i)m(6i+6,i+k+1)\quad &\textrm{if}~j~\textrm{is~odd}, \cr \sum_{i=1}^{\infty}c(j,i)m(6i+7,i+k+1)\quad &\textrm{if}~j~\textrm{is~even}.
\end{cases}
\end{align*}
\end{enumerate}

\begin{theorem}
For any positive integer $j$, we have
\begin{align}
\sum_{n=0}^{\infty}p_{-7}\left(5^{2j-1}+\dfrac{13\times5^{2j-1}+7}{24}\right)q^{n} &=\sum_{l=1}^{\infty}c(2j-1,l)q^{l-1}\dfrac{E_{5}^{6l-1}}{E_{1}^{6l+6}},\label{gf7:odd}\\
\sum_{n=0}^{\infty}p_{-7}\left(5^{2j}+\dfrac{17\times5^{2j}+7}{24}\right)q^{n} &=\sum_{l=1}^{\infty}c(2j,l)q^{l-1}\dfrac{E_{5}^{6l}}{E_{1}^{6l+7}}.\notag
\end{align}
\end{theorem}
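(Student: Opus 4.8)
The plan is to prove both generating-function identities together by a single coupled induction on $j$, mirroring verbatim the structure of the proof of Theorem~\ref{theorem:mod power of 5:even-odd} for $k=2$. The base case $j=1$ of \eqref{gf7:odd} is exactly \eqref{gf7:5n+3}: since $\tfrac{13\times5+7}{24}=3$, the left-hand side is $\sum_{n}p_{-7}(5n+3)q^{n}$, and the coefficients $c(1,1),\dots,c(1,6)$ are read off directly by matching \eqref{gf7:5n+3} against the template $\sum_{l}c(1,l)q^{l-1}E_{5}^{6l-1}/E_{1}^{6l+6}$. The inductive engine is the same two-stroke mechanism as before: each stroke rewrites the current eta-quotient series in the variables $\zeta=E_{1}/(qE_{25})$ and $T=E_{5}^{6}/(q^{5}E_{25}^{6})$, applies the huffing operator $H$ to the correct residue class, and then substitutes $q^{5}\mapsto q$.

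For the step from the odd identity (exponent $2j-1$) to the even identity (exponent $2j$), I would substitute $E_{5}^{6}=q^{5}TE_{25}^{6}$ and $E_{1}=q\zeta E_{25}$ into \eqref{gf7:odd}; each summand $q^{l-1}E_{5}^{6l-1}/E_{1}^{6l+6}$ collapses to $q^{-7}E_{5}^{-1}E_{25}^{-6}\,T^{l}\zeta^{-(6l+6)}$, so that
\[
\sum_{n=0}^{\infty}p_{-7}\!\left(5^{2j-1}n+\tfrac{13\times5^{2j-1}+7}{24}\right)q^{n}=\frac{1}{q^{7}E_{5}E_{25}^{6}}\sum_{l\ge1}c(2j-1,l)\,T^{l}\zeta^{-(6l+6)}.
\]
Here every factor except $\zeta^{-(6l+6)}$ is a series in $q^{5}$, while $q^{-7}=q^{-10}q^{3}$ contributes a residue shift of $3$; hence picking out the terms $q^{5n+3}$ reduces to applying $H$ to $\zeta^{-(6l+6)}$, and Lemma~\ref{key lemma} gives $H(\zeta^{-(6l+6)})=\sum_{k}m(6l+6,k)T^{-k}$. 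By Lemma~\ref{zero vaules}, $m(6l+6,k)\neq0$ forces $k\ge l+2$, so after the re-indexing $k\mapsto k+l+1$, dividing by $q^{3}$, and replacing $q^{5}$ by $q$ (which sends $E_{5}\mapsto E_{1}$, $E_{25}\mapsto E_{5}$, and $T\mapsto E_{1}^{6}/(qE_{5}^{6})$), the powers of $q,E_{5},E_{25}$ recombine exactly to $E_{5}^{6k}/E_{1}^{6k+7}$, while the coefficient becomes $\sum_{l}c(2j-1,l)m(6l+6,k+l+1)=c(2j,k)$ by the defining recurrence. The identity $5^{2j-1}\cdot3+\tfrac{13\times5^{2j-1}+7}{24}=\tfrac{17\times5^{2j}+7}{24}$ confirms that the argument of $p_{-7}$ advances as claimed, giving the even identity.

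The even-to-odd step is identical in form: one extracts the residue class $q^{5n+2}$ (justified by $5^{2j}\cdot2+\tfrac{17\times5^{2j}+7}{24}=\tfrac{13\times5^{2j+1}+7}{24}$), uses $H(\zeta^{-(6l+7)})=\sum_{k}m(6l+7,k)T^{-k}$, again shifts by $k\mapsto k+l+1$ and substitutes $q^{5}\mapsto q$, landing on $E_{5}^{6k-1}/E_{1}^{6k+6}$ with coefficient $\sum_{l}c(2j,l)m(6l+7,k+l+1)=c(2j+1,k)$, the second branch of the recurrence. This closes the induction.

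The main obstacle — indeed the only delicate point — is the exponent and constant bookkeeping. One must track the powers of $q$, $E_{5}$, and $E_{25}$ through the $\zeta$–$T$ rewriting and the subsequent $q^{5}\mapsto q$ substitution so that they land precisely on the templates $E_{5}^{6l-1}/E_{1}^{6l+6}$ and $E_{5}^{6l}/E_{1}^{6l+7}$, and simultaneously verify that the two affine progressions (extracting residues $3$ and $2$ alternately) advance the offset $\tfrac{13\times5^{2j-1}+7}{24}\rightarrow\tfrac{17\times5^{2j}+7}{24}\rightarrow\tfrac{13\times5^{2j+1}+7}{24}$ correctly. No new idea beyond the $k=2$ argument is required, which is why the author is content to state the result and omit the proof.
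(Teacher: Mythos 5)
Your proposal is correct and is essentially the proof the paper intends: the paper omits this argument, remarking only that ``this case is similar to the case $k=2$,'' and your two-stroke induction --- base case \eqref{gf7:5n+3}, rewriting in $\zeta$ and $T$, alternately extracting the residue classes $q^{5n+3}$ and $q^{5n+2}$ via $H$ and Lemma \ref{key lemma}, with the index shift $k\mapsto k+l+1$ forced by Lemma \ref{zero vaules} feeding the defining recurrence for $c(j,k)$ --- is exactly the mechanism used for Theorems \ref{theorem:mod power of 5:even-odd} and \ref{theorem:genral gf1}. Your exponent bookkeeping (the prefactors $q^{-7}E_{5}^{-1}E_{25}^{-6}$ and $q^{-8}E_{25}^{-7}$) and the offset checks $5^{2j-1}\cdot3+\tfrac{13\times5^{2j-1}+7}{24}=\tfrac{17\times5^{2j}+7}{24}$ and $5^{2j}\cdot2+\tfrac{17\times5^{2j}+7}{24}=\tfrac{13\times5^{2j+1}+7}{24}$ are all accurate.
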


\begin{lemma}
For any positive integers $j\geq1$ and $k\geq1$, we have
\begin{align}
\pi_{5}(c(2j-1,k)) &\geq j+\left\lfloor\dfrac{5k-5}{2}\right\rfloor,\label{estima2:odd}\\
\pi_{5}(c(2j,k)) &\geq j+\left\lfloor\dfrac{5k-3}{2}\right\rfloor.\notag
\end{align}
\end{lemma}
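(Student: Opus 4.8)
The plan is to run a double induction on $j$ that exactly parallels the proof of Lemma \ref{estima:lemma1}, alternating between the two stated bounds \eqref{estima2:odd} and its even companion. The only external input is Lemma \ref{lemma:estima}, together with the recurrence defining $c(j,k)$ and the finiteness of the sums guaranteed by Lemma \ref{zero vaules}. For the base case $j=1$ I would simply read the valuations off the explicit seed values: $\pi_{5}(c(1,1))=1$, $\pi_{5}(c(1,2))=4$, $\pi_{5}(c(1,3))=7$, $\pi_{5}(c(1,4))=8$, $\pi_{5}(c(1,5))=11$, $\pi_{5}(c(1,6))=13$, and $\pi_{5}(c(1,k))=+\infty$ for $k\geq7$, then check termwise that each is at least $1+\lfloor(5k-5)/2\rfloor$. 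This establishes \eqref{estima2:odd} for $j=1$.

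For the inductive step I would treat the two halves of the recurrence separately. Passing from an odd index to the next even one, the recurrence reads $c(2j,k)=\sum_{i\geq1}c(2j-1,i)\,m(6i+6,i+k+1)$, so by subadditivity of $\pi_{5}$ we have $\pi_{5}(c(2j,k))\geq\min_{i\geq1}\bigl(\pi_{5}(c(2j-1,i))+\pi_{5}(m(6i+6,i+k+1))\bigr)$. Inserting the inductive hypothesis and the estimate $\pi_{5}(m(6i+6,i+k+1))\geq\lfloor(5k-i-2)/2\rfloor$ from Lemma \ref{lemma:estima}, the task reduces to minimizing $\lfloor(5i-5)/2\rfloor+\lfloor(5k-i-2)/2\rfloor$ over $i\geq1$. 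Passing from even to odd, the recurrence instead involves $m(6i+7,i+k+1)$, whose valuation is at least $\lfloor(5k-i-3)/2\rfloor$, and one is led to minimize $\lfloor(5i-3)/2\rfloor+\lfloor(5k-i-3)/2\rfloor$.

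Both minimizations are settled by the monotonicity device already used in Lemma \ref{estima:lemma1}: increasing $i$ by $1$ raises the first floor by at least $2$ while lowering the second by at most $1$, so each summand function is nondecreasing in $i$ and attains its minimum at $i=1$. Evaluating at $i=1$ gives $\lfloor(5k-3)/2\rfloor$ in the first case, whence $\pi_{5}(c(2j,k))\geq j+\lfloor(5k-3)/2\rfloor$, i.e.\ the even bound; in the second case it gives $1+\lfloor(5k-4)/2\rfloor\geq 1+\lfloor(5k-5)/2\rfloor$, whence $\pi_{5}(c(2j+1,k))\geq(j+1)+\lfloor(5k-5)/2\rfloor$, i.e.\ the odd bound one level up. The only delicate point — and the main, though mild, obstacle — is the bookkeeping of the exact additive constants inside the floor functions, so that the two estimates interlock cleanly and the slack in $\lfloor(5k-4)/2\rfloor\geq\lfloor(5k-5)/2\rfloor$ is absorbed at the right place; once the monotonicity claim is verified the induction closes at once.
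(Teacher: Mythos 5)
Your proof is correct and is precisely the argument the paper intends: the paper omits this proof, saying only that the $p_{-7}$ case is ``similar to the case $k=2$,'' and your double induction --- using the valuation estimates $\pi_{5}(m(6i+6,i+k+1))\geq\lfloor(5k-i-2)/2\rfloor$ and $\pi_{5}(m(6i+7,i+k+1))\geq\lfloor(5k-i-3)/2\rfloor$ from Lemma \ref{lemma:estima}, the minimum-at-$i=1$ monotonicity device, and the absorption of the slack $\lfloor(5k-4)/2\rfloor\geq\lfloor(5k-5)/2\rfloor$ --- is exactly the analogue of the proof of Lemma \ref{estima:lemma1}. Your base-case check of the seed valuations and the arithmetic at $i=1$ in both halves of the induction are all correct, so the argument closes as claimed.
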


It follows immediately from \eqref{estima2:odd} that
\begin{align*}
\pi_{5}(c(2j-1,k)) &\geq j+\left\lfloor\dfrac{5k-5}{2}\right\rfloor\geq j+2
\end{align*}
for $k\geq2$.

According to \eqref{gf7:odd}, we have, modulo $5^{j+1}$,
\begin{align}
\sum_{n=0}^{\infty}p_{-7}\left(5^{2j-1}+\dfrac{13\times5^{2j-1}+7}{24}\right)q^{n} &\equiv c(2j-1,1)\dfrac{E_{5}^{5}}{E_{1}^{12}}\equiv c(2j-1,1)E_{5}^{3}\sum_{n=0}^{\infty}p_{-2}(n)q^{n}.\label{modulo gf 7}
\end{align}

Eqs. \eqref{5 power cong:odd 5n+2}--\eqref{5 power cong:5n+4} are immediate consequence of \eqref{estima2:odd}, \eqref{modulo gf 7} and \eqref{partition pairs mod 5}.

\section{Final Remarks}
A number of congruences satisfied by $k$-colored partitions have been found (see \cite{Atk1968, Gor1981, KO1992, Andr2008, FO2012, LW2017}, to name a few). For example, Atkin \cite{Atk1968} proved the following infinite families of congruences modulo powers of prime.
\begin{theorem}[Theorem 1.1, \cite{Atk1968}]
Suppose $k>0$ and $q=2,3,5,7$ or $13$. If $24n\equiv k\pmod{q^{r}}$, then $p_{-k}(n)\equiv0\pmod{q^{\frac{1}{2}\alpha r+\epsilon}}$, where $\epsilon=\epsilon(q,k)=O(\log k)$, and where $\alpha$ depends on $q$ and the residue of $k$ modulo $24$ according to a certain table.
\end{theorem}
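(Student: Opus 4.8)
The plan is to reinterpret the statement through the theory of modular forms and the $U_{\ell}$-operator, which is exactly the ``divide-down'' counterpart of the huffing operator $H$ used throughout this paper; to avoid a clash with the formal variable I write $\ell\in\{2,3,5,7,13\}$ for the prime (denoted $q$ in the statement) and keep $q=e^{2\pi i\tau}$ for the nome. First I would record that
\[
\eta(\tau)^{-k}=q^{-k/24}\prod_{n\ge1}(1-q^{n})^{-k}=\sum_{n\ge0}p_{-k}(n)\,q^{(24n-k)/24},
\]
so that the progression $24n\equiv k\pmod{\ell^{r}}$ is precisely the set of exponents of $\eta(\tau)^{-k}$ that are divisible by $\ell^{r}$ in the $24$-scaled grading. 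Defining $U_{\ell}\big(\sum a_{n}q^{n}\big)=\sum a_{\ell n}q^{n}$ (the operation ``apply $H$ and replace $q^{\ell}$ by $q$'' that is iterated throughout Sections \ref{sec:elementary pf}--\ref{mod5 power}), extracting the values $p_{-k}(n)$ along this progression amounts to computing $U_{\ell}^{\,r}$ applied to $\eta(\tau)^{-k}$. The whole problem thus reduces to bounding from below the $\ell$-adic valuation of the coefficients of $U_{\ell}^{\,r}\eta^{-k}$.

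The engine making this tractable for exactly these five primes is that $X_{0}(\ell)$ has genus $0$ precisely for $\ell\in\{2,3,5,7,13\}$, so $\Gamma_{0}(\ell)$ carries a Hauptmodul. Concretely I would take
\[
t_{\ell}(\tau)=\left(\frac{\eta(\tau)}{\eta(\ell\tau)}\right)^{s},\qquad s=\frac{24}{\gcd(24,\ell-1)},
\]
which generalises the eta-quotient variables $\zeta=E_{1}/(qE_{25})$ and $T=E_{5}^{6}/(q^{5}E_{25}^{6})$ governing the $\ell=5$ computations of Section \ref{lemmas section} (indeed $s=6$ when $\ell=5$). In these coordinates $\eta^{-k}$ is a fixed power of $\eta(\ell\tau)$ times a Laurent series in $t_{\ell}$, and $U_{\ell}$ becomes an explicit $\mathbb Z$-linear operator on such Laurent series, encoded by an infinite integer matrix $m_{\ell}(i,j)$ generalising $\{m(i,j)\}$ and its recurrence \eqref{recu formula}. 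The crucial input is then a valuation estimate of the shape
\[
\pi_{\ell}\big(m_{\ell}(i,j)\big)\ \ge\ \Big\lfloor\tfrac{\beta_{\ell}\,j-i-1}{2}\Big\rfloor,
\]
which for $\ell=5$ is exactly Lemma \ref{lemma:estima}; the denominator $2$ is what ultimately produces the factor $\tfrac12$ in the exponent $\tfrac12\alpha r$, while $\alpha$ records the order of the pole of $\eta^{-k}$ at the relevant cusp of $X_{0}(\ell)$ and hence depends only on $\ell$ and on $k\bmod24$, matching the ``certain table'' in the statement.

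With this in place the argument is a stabilisation estimate on the iterates $U_{\ell}^{\,j}\eta^{-k}$. I would show that each application of $U_{\ell}$ strictly lowers the pole order of the image at the cusp (roughly by a factor $\ell$), so that after $O(\log_{\ell}k)$ transient steps the iterates fall into a fixed finite-dimensional $U_{\ell}$-stable space $V$ spanned by boundedly many eta-quotients. On $V$ the operator $U_{\ell}$ acts, after normalising out the weight, by a fixed matrix whose entries are divisible by $\ell^{\alpha/2}$ in the sense of the displayed bound; iterating this $r$ times accumulates a factor $\ell^{\frac12\alpha r}$, exactly as the induction in Lemma \ref{estima:lemma1} and its $b$- and $c$-analogues propagates the estimate $j+\lfloor(5k-5)/2\rfloor$ through the recurrence (gaining one unit of valuation per two applications of $U_{5}$). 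The additive $\epsilon=O(\log k)$ is the cost of the $O(\log_{\ell}k)$ transient steps, during which the pole of $\eta^{-k}$, of order linear in $k$, is ground down before the steady geometric gain in valuation takes over. I expect the genuine obstacle to be precisely this transient analysis: one must control the valuation loss uniformly over all $k$ while the ambient Laurent expansion grows linearly in $k$, and it is here that the genus-$0$ hypothesis is indispensable, since it is what guarantees a single Hauptmodul $t_{\ell}$ and hence a single recurrence $m_{\ell}(i,j)$ valid for every $k$. By contrast, setting up $U_{\ell}$, identifying $t_{\ell}$, and running the induction once the stable space $V$ is located are all routine generalisations of the $\ell=5$ computations carried out above.
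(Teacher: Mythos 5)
First, a point of order: this statement is not proved in the paper at all --- it is Atkin's Theorem 1.1, quoted verbatim from \cite{Atk1968} in the Final Remarks purely to contextualize the paper's results --- so there is no ``paper's own proof'' to compare against, and your proposal must be judged as a free-standing argument. As such, it is a strategy outline rather than a proof. It does correctly identify the machinery Atkin actually uses (the genus-zero property of $X_{0}(\ell)$ for $\ell\in\{2,3,5,7,13\}$, the Hauptmodul $(\eta(\tau)/\eta(\ell\tau))^{s}$, the operator $U_{\ell}$, and $\ell$-adic valuation estimates for the matrix implementing $U_{\ell}$ on powers of the Hauptmodul), and your dictionary to the paper's $\ell=5$ apparatus ($\zeta$, $T$, $\{m(i,j)\}$, Lemma \ref{lemma:estima}, Lemma \ref{estima:lemma1}) is accurate.

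However, there are genuine gaps, and you concede the main one yourself. First, the displayed valuation bound $\pi_{\ell}\big(m_{\ell}(i,j)\big)\ge\big\lfloor(\beta_{\ell}\,j-i-1)/2\big\rfloor$ is the entire analytic content of the theorem, and you posit it rather than prove it; for $\ell=5$ it is Lemma \ref{lemma:estima} (itself imported from \cite{HH1981}), but for $\ell=2,3,7,13$ the correct shape of the estimate differs from prime to prime (the gain per application of $U_{13}$, for instance, is far weaker), and no uniform derivation is offered. Second, the ``transient analysis'' --- controlling the loss of valuation over the $O(\log_{\ell}k)$ initial iterations, uniformly in $k$ --- is exactly what produces $\epsilon(q,k)=O(\log k)$, and you state explicitly that you expect it to be the genuine obstacle; expecting an obstacle is not the same as overcoming it. Third, and most concretely, your identification of $\alpha$ as ``the order of the pole of $\eta^{-k}$ at the relevant cusp'' cannot be correct as stated, because Atkin's table contains entries with $\alpha=0$ --- indeed the paper's own Final Remarks note that $\alpha=0$ when $k\equiv17\pmod{24}$ and $q=5$, which is precisely why no analogue of Theorems \ref{beau power}--\ref{thm case 7:modulo powers of 5} exists for $k=17$ --- whereas $\eta^{-k}$ always has a genuine pole. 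Deriving that table, i.e., determining the per-iteration valuation gain as a function of $q$ and the residue of $k$ modulo $24$, is the combinatorial heart of Atkin's argument and is absent from your sketch.
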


Applying the operator $H$, we also obtain some infinite families of congruences modulo powers of 5 for $k=11$ by following the same line of proving Theorems \ref{beau power} and \ref{thm:cong high power}. However, for $k=17$, it seems that there do not exist congruences modulo powers of 5 similar to the types of \eqref{5 power cong:odd}--\eqref{5 power cong 3}. Interestingly, Atkin's results also assert that $\alpha=0$ when $k\equiv17\pmod{24}$ and $q=5$.

\section*{Acknowledgement}
I am indebted to Ernest X. W. Xia, Michael D. Hirschhorn, Shishuo Fu and Shane Chern  for their helpful comments and suggestions that have improved this paper to a great extent. I would like to thank the referee who read the original carefully, picked up a number of typos and made some helpful comments. This work was supported by the National Natural Science Foundation of China (No.~11501061).

\end{document}